\newcommand{\colim}{\operatorname{colim}}
\newcommand{\sign}{\operatorname{sign\/}}
\newcommand{\ku}{k[u,u^{-1}]}
\newcommand{\C}{\operatorname{C\/}}
\newcommand{\Ab}{\operatorname{Ab\/}}
\newcommand{\Z}{\mathbb{Z\/}}
\newcommand{\CC}{\operatorname{CC\/}}
\newcommand{\HH}{\operatorname{HH\/}}
\newcommand{\MF}{\operatorname{MF\/}}
\newcommand{\id}{\operatorname{id\/}}
\newcommand{\RHom}{\operatorname{RHom\/}}
\newcommand{\RG}{\operatorname{R\Gamma\/}}
\newcommand{\Hom}{\operatorname{Hom\/}}
\newcommand{\Ext}{\operatorname{Ext\/}}
\newcommand{\Res}{\operatorname{Res\/}}
\newcommand{\cone}{\operatorname{cone\/}}
\newcommand{\ext}{\operatorname{ext\/}}
\newcommand{\inte}{\operatorname{int\/}}
\newcommand{\pr}{\operatorname{pr\/}}
\newcommand{\tR}{\widetilde{R}}
\def\H{\operatorname{H\/}}
\def\Spec{\operatorname{Spec\/}}
\def\stab{\operatorname{stab\/}}
\def\tr{\operatorname{tr\/}}
\def\D{{\operatorname{D\/}}}
\def\mod{\operatorname{mod\/}}
\def\op{\operatorname{op\/}}
\def\rank{\operatorname{rank\/}}
\def\Mat{\operatorname{Mat\/}}
\def\ch{\operatorname{ch\/}}
\def\T{\mathcal{T\/}}
\def\m{\mathfrak{m\/}}
\def\lra{\longrightarrow}
\def\lto{\longrightarrow}
\newcommand{\Zt}{\mathbb{Z\/}/2}
\newtheorem{theo}{Theorem}[section]
\newtheorem{lem}[theo]{Lemma}
\newtheorem{cor}[theo]{Corollary}
\newtheorem{prop}[theo]{Proposition}
\theoremstyle{definition}
\newtheorem{remark}[theo]{Remark}
\begin{document}

\title{The Kapustin-Li formula revisited}
\author{Tobias Dyckerhoff \and Daniel Murfet}
\maketitle

\begin{abstract} 
	We provide a new perspective on the Kapustin-Li formula for the duality pairing on the morphism complexes 
	in the matrix factorization category of an
	isolated hypersurface singularity. In our context, the formula arises as an explicit description of
	a local duality isomorphism, obtained by using the basic perturbation lemma and Grothendieck residues. 
	The non-degeneracy of the pairing becomes apparent in this setting.
	Further, we show that the pairing lifts to a Calabi-Yau structure on the
	matrix factorization category. This allows us to define
	topological quantum field theories with matrix factorizations as boundary conditions.
\end{abstract}

\tableofcontents

\section{Introduction}

Let $k$ be a field of characteristic zero and $R$ be a regular local augmented $k$-algebra with maximal ideal $\m$. 
We consider a function $w \in \m$ on $\Spec(R)$ which we assume to have an
isolated singularity at $\m$. The object of our interest is the local germ of the
singular hypersurface defined by the equation $w=0$. More precisely, we study the stable
homological algebra of the hypersurface by means of the category of matrix
factorizations $\MF(R,w)$. The latter is a differential $\Zt$-graded category
whose objects are given by $\Zt$-graded finite free $R$-modules $X = X^0 \oplus X^1$
equipped with an odd endomorphism $d$ satisfying $d^2 = w$. Such an object
$(X,d)$ corresponds, after choosing 
bases for $X^0$ and $X^1$, to a pair of
square matrices $(\varphi,\psi)$ satisfying 
\[
\varphi \circ \psi = \psi \circ \varphi = w \id \text{,}
\]
hence the nomenclature. Equivalently, we can combine the matrices into a
supermatrix
\[
Q = \left( \begin{array}{rr} 0 & \varphi \\ \psi & 0 \end{array} \right)
\]
satisfying $Q^2 = w \id$. We refer to $d$ or $Q$ as the twisted differential
associated with the matrix factorization $X$.
More details on the category $\MF(R,w)$ as well as an
overview of its relevance in terms of homological algebra over $R/w$ can be found in
\cite{eisenbud,yoshino,dyck4} and the references therein. 

Fixing a hypersurface $(R,w)$, we introduce the abbreviation $T =
\MF(R,w)$ as well as the symbol $[T]$ for the homotopy category of $T$. Recall
that the category $[T]$ is obtained by applying $\H^0(-)$ to all morphism complexes in $T$. For
matrix factorizations $X$ and $Y$, the morphism complex in $T$ will be denoted by
$T(X,Y)$, the morphisms in the homotopy category by $[T](X,Y)$.
As first established by Auslander \cite[Proposition 8.8 in Ch.~1 and Proposition 1.3 in Ch.~3]{auslander}, the triangulated category $[T]$ is a
Calabi-Yau category, i.e. there exist non-degenerate pairings
\[
[T](X,Y) \otimes_k [T](Y,X[n]) \to k
\]
for every pair of objects $X$,$Y$ in $[T]$. However, an explicit description of
this pairing was not known until this category appeared in the
context of topological string theory.
Following a proposal by Kontsevich, the physicists Kapustin and Li
\cite{kapustin}
interpreted the category $[T]$ as the category of boundary conditions in
the Landau-Ginzburg $B$-model corresponding to $(R,w)$. This allowed them to apply 
path integral methods when $k = \mathbb{C}$ in order to derive a formula
for a pairing
\[
[T](X,Y) \otimes_k [T](Y,X[n]) \to k, \;
(F,G) \mapsto 
\frac{1}{(2 \pi i)^n n!}\oint_{\left\{ | \partial_i w | = \epsilon \right\}} \frac{ 
\tr( FG (dQ)^{\wedge n})}{
\partial_1 w \partial_2 w \cdots \partial_n w}\text{,}
\]
where $Q$ is the twisted differential associated with $Y$.
A first attempt to put the pairing into a mathematical context was
outlined in \cite{segal}.

In \cite{murfet-2009} the second named author gave a mathematical derivation of
this formula and proved its non-degeneracy, as a special case of a general
statement about Serre duality in the singularity category of an arbitrary
isolated Gorenstein singularity. In this work we give an alternative and more
direct derivation of the pairing for hypersurfaces, using the techniques
developed in [ibid.]. Fundamentally the pairing is obtained from local
duality applied to the mapping complexes in the category $T$, and the explicit
form of the duality isomorphism is obtained by employing the basic perturbation
lemma as well as the theory of residue symbols. In this context the formula
naturally takes the form (Theorem \ref{kl})
\[
	(F,G) \mapsto 
	(-1)^{n+1 \choose 2}\frac{1}{n!}\Res \left[ \begin{array}{c} 
		\tr( FG (dQ)^{\wedge n}  )\\
		\partial_1 w, \partial_2 w,\; \cdots , \partial_n w \end{array}
		\right] \text{.}
\]

As a second main result, we show in Section \ref{sect.topfield} that the pairing given by the Kapustin-Li
formula is part of a Calabi-Yau structure on the dg category $\MF(R,w)$. This
means that the pairing factors canonically over the cyclic complex
of $\MF(R,w)$. The importance of this structure lies in the fact that it allows
us to define $2$-dimensional topological quantum field theories in the sense of \cite{costello} and \cite{lurie}.
This result is based on a variant of the calculation of the
boundary-bulk map in \cite{polishchuk} which we perform in Section \ref{sect.bb}. The representative 
\[	
 	\Hom(E,E) \to \Omega_w[n],\; F
		\mapsto (-1)^{n+1 \choose 2}\frac{1}{n!} \tr(  F(dQ)^{\wedge n}  )
\]
of the map which we
provide in Theorem \ref{theorem.bb} is adapted to the Kapustin-Li formula on the chain level.
Our argument involves another application of the basic perturbation lemma
where we use an explicit homotopy which contracts a Koszul complex onto its
cohomology. The construction of this canonical contracting homotopy in Section
\ref{sect.canhom} may be considered as a result of independent interest.

As a well-known application of the field theory formalism, we illustrate in
Section \ref{sect.rr} how a
Riemann-Roch formula, which presumably agrees with the one given in
\cite{polishchuk}, can be pictorially deduced from the existence of a field
theory.\\

We will now outline our derivation of the Kapustin-Li formula. To exhibit the 
relation to classical local duality, we sketch the
argument in a $\Z$-graded context, the detailed argumentation will be given in a
purely $\Zt$-graded setting. Fixing two objects
$X$,$Y$ in the category $T = \MF(R,w)$, 
we think of the mapping complex $T(X,Y)$ as a $2$-periodic $\Z$-graded complex.
Local duality provides an isomorphism
\begin{equation}\label{duality}
\RG_\m(T(X,Y)) \stackrel{\simeq}{\lra} \RHom( \Hom_R(T(X,Y),R), \RG_\m(R))
\end{equation}
in the derived category of $R$-modules. Analyzing the right hand side, observe that the graded trace pairing yields an isomorphism between the complex $\Hom_R(T(X,Y),R)$
and $T(Y,X)$. Since $R$ is a regular local ring, thus Gorenstein, we have an isomorphism $\RG_\m(R) \cong
\H^n_{\m}(R)[-n]$, and $\H^n_{\m}(R)$ is an injective hull of the residue field. Furthermore, there exists
a natural map 
\[
\Res: \H^n_{\m}(R) \lra k 
\]
given by the Grothendieck residue symbol.
Since the singularity is assumed to be isolated, the cohomology modules of the
complex $T(Y,X)$ have finite length which in turn implies that the map
\[
\Hom( T(Y,X), \H^n_{\m}(R))\stackrel{\Res_*}{\lra} \Hom( T(Y,X),R), k)
\]
is a quasi-isomorphism.
On the left hand side of the duality isomorphism (\ref{duality}), we deduce that the natural map
\[
\RG_\m(T(X,Y)) \stackrel{f}{\lra} T(X,Y)
\]
is a quasi-isomorphism since the restriction of $T(X,Y)$ to the complement of $\m$ in
$\Spec(R)$ is contractible. Combining the above observations, we obtain the diagram 
\[
\xymatrix@C=2cm{
\RG_\m(T(X,Y)) \ar[d]^f \ar[r]^(0.42)g &  \Hom(T(Y,X[n]), \H^n_{\m}(R)) \ar[d]^{\Res_*} \\
T(X,Y) \ar@{.>}[r] & \Hom( T(Y,X[n]), k)
}
\]
establishing the duality pairing of the category $T$. 
We subdivide the problem of finding an explicit formula for this
pairing into
\begin{enumerate}[(I)]
	\item Find a model of $\RG_m T(X,Y)$ in which the maps $f$ and $g$
		become explicit (\emph{Koszul model})
	\item Invert the map $f$ up to homotopy (\emph{Basic Perturbation Lemma})
	\item Describe the map $\Res_*$ explicitly (\emph{Grothendieck residues})\text{,}
\end{enumerate}
where we indicated the techniques which we will use in brackets.\\

In conclusion, the outline of the paper is as follows. After collecting the
necessary preliminaries in Section \ref{protagonists}, the new derivation of the
Kapustin-Li pairing is detailed in Section \ref{sect.kapli}, following the above
steps (I) through (III). Our calculation of
the boundary-bulk map is given in Section \ref{sect.bb} and then used in Section
\ref{sect.topfield} to construct topological quantum field theories and deduce a Riemann-Roch
formula.\\

\emph{Conventions.} We use the symbol $\cong$ to denote an isomorphism of
complexes and the symbol $\simeq$ to denote a quasi-isomorphism. Duals in
various contexts will be referred to by the symbol
$-^{\vee}$. Applied to an $R$-linear
complex $Z$, the complex $Z^{\vee}$ will be the mapping complex $\Hom(Z,R)$ with
the usual Koszul signs. Applied to an object $X$ in $\MF(R,w)$, we obtain an
object $X^{\vee}$ in $\MF(R,-w)$ by forming the mapping complex $\Hom(X,R)$
and ignoring the fact that the differential on $X$ does not square to $0$.
Similarly, $\Hom(-,-)$ always refers to a mapping complex where in the context
of matrix factorizations the differential does not necessarily square to $0$.
The $2$-periodic $\Z$-graded mapping complexes in the category $T = \MF(R,w)$ will
be denoted by $T(X,Y)$ whereas we use $\Hom(X,Y)$ for their $\Zt$-graded
counterpart. 

\section{Preliminaries} \label{protagonists}

\subsection{The Basic Perturbation Lemma}\label{sect.bpl}

Homological perturbation theory is concerned with the transport of algebraic
structures along homotopy equivalences of complexes. A typical example of a
structure which admits such a transport feature is the structure of an
$A_\infty$ algebra as introduced by Stasheff.
Roughly, the basic perturbation lemma is concerned with
transporting an additional differential $\delta$ on a complex $(B,d)$ along a homotopy equivalence of
complexes $f: (A,d) \to (B,d)$. One thinks of $d + \delta$ as a small perturbation of
$(B,d)$ and attempts to perturb $f$ and $(A,d)$ to obtain a new, perturbed
homotopy equivalence.
In comparison with spectral sequence techniques, the lemma has the advantage of producing explicit formulas.


We recall the variant of the basic perturbation lemma from \cite{crainic} which
we will apply. Let $R$ be a commutative ring with unit. 
A \emph{deformation retract datum} 
consists of 
\begin{equation}\label{eq:deforetractdatum}
\left[\xymatrix@C+1pc{
(A,d) \ar@<+.5ex>[r]^{\iota} & \ar@<+.5ex>[l]^{p} (B,d)\text{,} \; h }\right] \text{,}
\end{equation}
where $(A,d)$ and $(B,d)$ are complexes of $R$-modules, $\iota$ and $p$ are maps
of complexes, and $h$ is a homotopy on $B$ such that
\begin{enumerate}[(1)]
\item $p\iota = \id_A$
\item $\iota p = \id_B + dh + hd$.
\end{enumerate}
Given a perturbation of the differential on $B$, the lemma produces a new deformation retract.


\begin{lem}[Basic Perturbation Lemma] \label{basic} Suppose we are given a deformation
retract datum (\ref{eq:deforetractdatum}) and bounded below increasing
filtrations on $A$ and $B$ which are preserved by $\iota, p$ and $h$. Let
$\delta$ be a degree one map on $B$ which lowers the filtration and suppose that
$(d + \delta)^2 = 0$. Then the operator $\psi = \sum_{j \ge 0} (\delta h)^j \delta$
is well-defined and
\begin{itemize}
\item $\iota_{\infty} = \iota + h \psi \iota$, 
\item $p_\infty = p + p \psi h$,
\item and $h_{\infty} = h + h \psi h$
\end{itemize}
define a new \emph{perturbed} deformation retract datum
\begin{equation}\label{eq:deforetractdatum2}
\left[\xymatrix@C+1pc{
(A,d + p\psi \iota) \ar@<+.5ex>[r]^(0.5){\iota_{\infty}} & \ar@<+.5ex>[l]^(0.5){p_\infty} (B,
d + \delta)\text{,} \; h_{\infty} 
}\right] \text{.}
\end{equation}
\end{lem}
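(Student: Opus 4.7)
The plan is to proceed in three stages: establish that the series defining $\psi$ converges termwise, derive two recursive identities that $\psi$ satisfies, and then verify each axiom of the perturbed deformation retract by direct algebraic manipulation.

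For convergence, the hypothesis that $\iota$, $p$, $h$ preserve the filtration and $\delta$ strictly lowers it means that the composite $\delta h$ lowers the filtration. Hence for $b \in F_n B$ the term $(\delta h)^j \delta(b)$ lies in $F_{n-j-1}B$; since the filtration is bounded below, only finitely many terms of $\psi(b) = \sum_{j \ge 0} (\delta h)^j \delta(b)$ are nonzero, and the same argument shows that $\iota_\infty$, $p_\infty$, and $h_\infty$ are well-defined operators.

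The key algebraic identities are
\[
\psi = \delta + \delta h \psi = \delta + \psi h \delta,
\]
obtained by splitting off the first or last factor of $\delta h$ from the defining series. Together with the expansion $d\delta + \delta d + \delta^2 = 0$ of the perturbed cocycle condition $(d+\delta)^2 = 0$, and the original retract identities $p\iota = \id_A$ and $\iota p - \id_B = dh + hd$, these are the only algebraic inputs required.

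It then remains to check, by direct expansion, that $\iota_\infty$ and $p_\infty$ intertwine the new differentials $d + p\psi\iota$ and $d + \delta$; that $p_\infty \iota_\infty = \id_A$; that $\iota_\infty p_\infty - \id_B = (d+\delta)h_\infty + h_\infty(d+\delta)$; and that $(d+p\psi\iota)^2 = 0$ (which follows from the previous items by sandwiching with $\iota p$, but can also be verified directly from the identities above). Each verification amounts to expanding the definitions of $\iota_\infty$, $p_\infty$, $h_\infty$ and repeatedly substituting $\delta h \psi = \psi - \delta$ or $\psi h \delta = \psi - \delta$ to collapse towers of $\delta h$'s, together with $dh + hd = \iota p - \id_B$ to absorb adjacent factors of $\iota p$. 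The main obstacle is not conceptual but combinatorial: the proof of the homotopy identity involves the four-term product $\iota_\infty p_\infty = \iota p + \iota p \psi h + h \psi \iota p + h\psi \iota p \psi h$, and one must choose the order of rewrites carefully so that the remaining terms telescope exactly against the expansion of $(d+\delta)h_\infty + h_\infty(d+\delta)$.
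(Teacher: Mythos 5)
Your overall strategy — convergence from the filtration, the recursions $\psi = \delta + \delta h\psi = \delta + \psi h\delta$, and direct expansion against the retract axioms — is the right one, and since the paper's own proof consists solely of the citation \cite[Theorem~2.3]{crainic}, your proposal supplies the argument the paper omits. Two refinements are in order.

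First, there is a cleaner organizing principle than the ad hoc telescoping you describe: the single identity
\[
d\psi + \psi d + \psi\,\iota p\,\psi \;=\; 0,
\]
which follows from your $\psi$-recursions together with $d\delta + \delta d = -\delta^2$ and $\iota p = \id_B + dh + hd$. Sandwiching it with $p(-)\iota$, $h(-)\iota$, $p(-)h$, and $h(-)h$ yields, respectively, $(d + p\psi\iota)^2 = 0$, the chain-map property of $\iota_\infty$, the chain-map property of $p_\infty$, and the homotopy identity $\iota_\infty p_\infty - \id_B = (d+\delta)h_\infty + h_\infty(d+\delta)$. This collapses the ``combinatorial obstacle'' you flag for the homotopy identity into one computation.

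Second — and this is a genuine gap — you list $p_\infty\iota_\infty = \id_A$ among the verifications without explaining how to close it, and it is precisely the one that does \emph{not} follow from the manipulations you describe. Expanding,
\[
p_\infty\iota_\infty \;=\; \id_A \;+\; ph\psi\iota \;+\; p\psi h\iota \;+\; p\psi h^2\psi\iota,
\]
and the three correction terms cannot be eliminated from $p\iota = \id_A$, $\iota p - \id_B = dh + hd$, and the $\psi$-recursions alone. The standard fix is to add the \emph{side conditions} $h\iota = 0$, $ph = 0$, $h^2 = 0$ to the hypotheses: since $\id + \psi h = \sum_{a\ge 0}(\delta h)^a$ and $\id + h\psi = \sum_{b\ge 0}(h\delta)^b$, every monomial $p(\delta h)^a(h\delta)^b\iota$ with $(a,b)\neq(0,0)$ either ends in $h\iota$, begins with $ph$, or contains $h^2$ in its interior, and so dies on sight. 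These side conditions are assumed, explicitly or after a preliminary reduction, in essentially every treatment of the perturbation lemma; they can always be arranged by modifying $h$ (for instance $h\mapsto(\id-\iota p)\,h\,(\id-\iota p)$ enforces $ph=0$ and $h\iota=0$ while preserving the two retract axioms, and a further standard replacement enforces $h^2=0$). They are, however, not listed in the paper's definition of a deformation retract datum, so you should either add them to the hypotheses or insert that reduction step before running the direct verification — otherwise the proof of $p_\infty\iota_\infty=\id_A$ is not complete.
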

\begin{proof}
See \cite[Theorem 2.3]{crainic}.
\end{proof}


\subsection{The Koszul model for local cohomology}\label{sect.loc}

We present a quick derivation of some aspects of local cohomology which will be
relevant for us. Details can be found in \cite{residuesandduality,hartlocal,brunsherzog,kunz}.

Let $R$ be a regular local augmented $k$-algebra of Krull dimension $n$ with
maximal ideal $\m$.
For a finitely generated $R$-module $M$ we define the functor
\[
\Gamma_{\m}M = \{ x \in M \; :\; \m^k x = 0\;\; \text{for some $k\ge 0$}\}
\]
of global sections with support in $\{ \m \}$. Recall that the right derived
functors of $\Gamma_{\m}$ are the local cohomology functors with respect to $\m$
which we denote by $\H^i_{\m}(-)$. We can calculate local cohomology by using
the fact that there is a triangle
\begin{align}\label{triangle}
\RG_{\m} M \lra M \lra \RG(U,\widetilde{M}_{|U}) \lra \RG_{\m} M [1]
\end{align}
expressing $\RG_{\m} M [1]$ as the cone of the restriction map to the open
subscheme $U := \Spec(R) \backslash \{\m\}$ of $\Spec(R)$. 
Indeed, assume that $\underline{t} = \{ t_1, \dots, t_n \}$ is a system of
parameters for $R$. This system defines a covering $\mathfrak{U}$ of the punctured
spectrum $U$. The corresponding normalized \v{C}ech complex has graded
pieces
\begin{align}\label{eq.koszul}
\widetilde{C}^p({\mathfrak U},\, \widetilde{M}) = \bigoplus_{i_0 < \dots < i_p}
M_{t_{i_0} \dots t_{i_p}} 
\end{align}
with the usual differential given by the alternating sum over restriction maps.
It is well-known that the complex $\widetilde{C}^{\bullet}({\mathfrak U},\,
\widetilde{M})$ is quasi-isomorphic to $\RG(U,\widetilde{M}_{|U})$. The
restriction map $M \to \widetilde{C}^{\bullet}({\mathfrak U},\,
\widetilde{M})$ is given by the sum over the restriction maps $M \to M_{t_i}$. Using the above triangle
(\ref{triangle}), we can now calculate $\RG_{\m} M [1]$ and obtain
\[
\RG_{\m} M \simeq K^{\infty}(\underline{t}; M) := K^{\infty}(\underline{t}; R) \otimes_R M
\]
where 
\[
K^{\infty}(\underline{t}; R) := \bigotimes_{i=1}^{n} \left( R \lra R_{t_i}
\right)
\]
is called the stable cohomological Koszul complex corresponding to the sequence $\underline{t}$. We
remark that this argumentation generalizes in a straightforward way, when we
replace the module $M$ by a complex of $R$-modules.

\subsection{Generalized fractions}\label{sect.fractions}

As in the previous section, let $R$ be a regular local augmented $k$-algebra with maximal ideal
$\m$. Let $M$ be a finite $R$-module.
From the triangle (\ref{triangle}), we deduce the existence of a surjective map
\[
\xymatrix{
\H^{n-1}(U,\widetilde{M}) \ar@{->>}[r] &\H^n_{\m}(M)\text{.}
}
\]
Using the normalized \v{C}ech model for $\H^{\bullet}(U,\widetilde{M})$ described in the
previous section, this allows us to represent local cohomology classes by \v{C}ech
cocycles. More precisely, after choosing a system of parameters
$\underline{t}$, we obtain a surjective map
\[
\xymatrix{
M_{t_1 \cdots t_n} \ar@{->>}[r]^{\sigma_{\underline{t}}}&  \H^n_{\m}(M)
}
\]
and introduce the notation
\[
\left[ \begin{array}{c} m\\ t_1, t_2,\; \cdots, t_n \end{array} \right] :=
	\sigma_{\underline{t}}(\frac{m}{t_1 \cdots t_n}) \text{.}
\]
The expression on the left is called a generalized fraction. Note, that if the
denominator of a generalized fraction is changed, then the map
$\sigma_{\underline{t}}$ changes accordingly. Assume $\underline{t}'$ is a another system of parameters, such that
\[
t'_i = \sum_{j=1}^{n} C_{ij} t_j
\]
for a matrix $C$ with coefficients in $R$. Then we have the transformation rule
\begin{align}\label{trafo}
\left[ \begin{array}{c} m \\ t_1, t_2,\; \cdots, t_n \end{array} \right] =
\left[ \begin{array}{c} \det(C) m \\ t'_1, t'_2,\; \cdots, t'_n \end{array} \right]
	\text{.}
\end{align}
Detailed proofs of these statements can be found in \cite{lipman, kunz}.

\subsection{Dualizing functors and Grothendieck residues}\label{sect.residue}

We recall the dualizing theory developed in \cite[\textsection 4]{hartlocal}. Let $R$ be
as in the previous section and consider the category $\T$ of 
$R$-modules of finite length. A functor $D : \T^{\text{op}} \to \Ab$
into the category of abelian groups is called dualizing, if
\begin{enumerate}[(1)]
	\item $D$ is exact,
	\item $D(k) \cong k$.
\end{enumerate}
To a dualizing functor $D$ one associates the injective $R$-module 
\[
I = \colim_i D(R/\m^i)
\]
and proves that there exists a natural equivalence of
functors
\[
D(-) \stackrel{\cong}{\lra} \Hom_R(-,I) \text{.}
\]
One verifies that $I$ is an injective hull of the residue field $k$, and 
thus dualizing functors are unique (up to non-canonical equivalence).
In our situation, where $R$ is regular local, there are two natural dualizing
functors which we can consider.
\begin{enumerate}[(1)]
	\item The functor $\Hom_k(-,k)$ obviously defines a dualizing
		functor.
	\item The functor $\Ext^n(-,R)$ is a dualizing functor. Indeed,
		$\Ext^{\bullet}(k,R)$ is concentrated in degree $n$ and
		$\Ext^n(k,R) \cong k$, as one easily calculates via a Koszul
		resolution of $k$. But every module $M$ of finite length can be
		obtained via finitely many extensions by the module
		$k$. The long exact sequence for $\Ext$ tells us that
		$\Ext^{\bullet}(M,R)$ is concentrated in degree $n$, thus
		$\Ext^n(-,R)$ is exact. The injective module corresponding to
this dualizing functor is given by
		\[
		I = \colim_i \Ext^n(R/\m^i, R)\text{.}
		\]
		Directly from the definition of local cohomology, we deduce that
		$I$ is isomorphic to the top local cohomology module
		$\H^n_{\m}(R)$. In conclusion, we obtain a natural equivalence
		\[
		\Ext^n(-,R) \cong \Hom_R(- , \H^n_{\m}(R))
		\]
		of dualizing functors.
\end{enumerate}

\noindent
By the uniqueness of dualizing functors, there must exist a possibly non-canonical
equivalence between both functors.
After identifying $R$ with the rank $1$ free
$R$-module of top differential forms $\Omega^n$ (more precisely, we should use
universally finite differentials as explained in \cite{kunz.kaehler}), one can actually construct a
canonical identification via residues.
Namely, there exists a natural map
\[
\Res:\; \H^n_{\m}(\Omega^n) \lra k
\]
which is called the Grothendieck residue symbol. It induces an equivalence of
dualizing functors
\begin{align}\label{dualequiv}
	\Res_*:\; \Hom_R(-, \H^n_{\m}(\Omega^n)) \stackrel{\cong}{\lra}
	\Hom_k(-,k)\text{.}
\end{align}
Details on the construction of the residue
symbol and its natural properties can be found in \cite{lipman}. We will only recall how to calculate it in
terms of generalized fractions. Let us choose a regular sequence $\underline{x}$
of generators of the maximal ideal $\m$ in $R$. This yields a trivialization of
the module $\Omega^n$ by choice of the generator $d\underline{x} = dx_1 \wedge
\dots \wedge dx_n$.
Now let 
\[
\left[ \begin{array}{c} \omega\\ t_1, t_2,\; \cdots, t_n \end{array} \right] 
\]
be a generalized fraction representing an element of $\H^n_{\m}(\Omega^n)$ in
the sense of the previous section. Since $\underline{t}$ is a system of
parameters, there exists $i$ such that $\m^i \subset (t_1, \dots, t_n)$. Using
the transformation rule for generalized fractions, we find 
\[
\left[ \begin{array}{c} \omega\\ t_1, t_2,\; \cdots, t_n \end{array} \right] 
=
\left[ \begin{array}{c} r d\underline{x}\\ x_1^i, x_2^i,\; \cdots, x_n^i \end{array} \right] 
\]
for some $r \in R$ which can be calculated by formula (\ref{trafo}). We embed $R \subset k[
[x_1, \dots, x_n ]]$ and represent $r$ as a power series. Expanding
$\frac{r}{x_1^i \cdots x_n^i}$ as a Laurent series, the residue is given by the
coefficient corresponding to $(x_1 \cdots x_n)^{-1}$.

Finally, we mention that for $k =\mathbb C$ we can apply analytic methods to
calculate the residue symbol. In this case, we have
\[
\Res \left[ \begin{array}{c} \omega\\ t_1, t_2,\; \cdots, t_n \end{array} \right] = 
\frac{1}{(2 \pi i)^n}  \oint_{|t_i| = \varepsilon} \frac{\omega}{t_1 \dots t_n}
\]
and we refer to \cite{griffiths} for a detailed treatment of duality based on
this analytic definition.

\section{The Kapustin-Li Formula}\label{sect.kapli}

As above let $R$ be a regular local augmented $k$-algebra of finite Krull
dimension $n$ with maximal ideal $\m$. We fix a regular sequence
\[
\underline{x} = \{ x_1, x_2, \dots, x_n \}
\]
of generators of $\m$. We use the abbreviation $\partial_i :=
\frac{\partial}{\partial x_i}$ and introduce 
\[
\underline{t} = \{ \partial_1 w, \partial_2 w, \dots, \partial_n w \}
\]
which forms a system of parameters, since the singularity of $\Spec(R/w)$ is
assumed to be isolated and the characteristic of $k$ is zero (cf.
\cite[Proposition (1.2)]{looijenga}).
As previously, we abbreviate $T = \MF(R,w)$. Let $X,Y$ be matrix factorizations and consider the
morphism complex $T(X,Y)$ as a $2$-periodic $\Z$-graded complex of (free) $R$-modules.
In the introduction, we defined the diagram
\begin{align}\label{diagram}
\xymatrix@C=2cm{
\RG_\m(T(X,Y)) \ar[d]^f \ar[r]^(0.42)g &  \Hom(T(Y,X[n]), \H^n_{\m}(R)) \ar[d]^{\Res_*} \\
T(X,Y) \ar@{.>}[r] & \Hom( T(Y,X[n]), k)
}
\end{align}
and subdivided the explicit derivation of the duality pairing into three steps. 
In this section we will now provide the details. We choose to formulate the
argument in a purely $\Zt$-graded setting replacing the $2$-periodic mapping
complexes in $T(X,Y)$ by the $\Zt$-graded ones which we denote by $\Hom(X,Y)$. 
The comparison between the $\Zt$-graded and $\Z$-graded context is given as
follows. Define the $2$-periodification
\[
P : \; C^{\Zt}(R) \to C^{\Z}(R)
\]
which extends a $\Zt$-graded complex $2$-periodically and the $\Zt$-folding
\[
F : \; C^{\Z}(R) \to C^{\Zt}(R), Z \mapsto (\bigoplus_{k \text{ even}} Z^k) \oplus
(\bigoplus_{k \text{ odd}} Z^k) \text{.}
\]
Then observe that the following holds.

\begin{prop}\label{adjunction}
	Let $A$ be a $\Zt$-graded complex and $B$ a bounded $\Z$-graded
	complex. Then 
	\begin{enumerate}[(a)]
		\item we have
	\[
	\Hom^{\Z}_R(P(A), B) \cong P \Hom^{\Zt}_R(A, F(B))\text{,}
	\]
	in particular, after passing to homotopy classes of maps, we obtain
	\[
	[P(A), B] \cong  [A, F(B)]\text{.}
	\]
		\item Further we have
	\[
	P(A) \otimes_R^{\Z} B \cong P(A \otimes_R^{\Zt} F(B))\text{.}
	\]
	\end{enumerate}
\end{prop}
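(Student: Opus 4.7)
The plan is to verify both (a) and (b) by unpacking the definitions of $P$ and $F$ and checking the claimed isomorphisms graded-component by graded-component, then observing that the canonical differentials agree.

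For part (a), I would start by writing out the $n$-th graded piece of $\Hom^{\Z}_R(P(A),B)$ as $\prod_{k \in \Z} \Hom_R(P(A)^k, B^{k+n}) = \prod_{k \in \Z} \Hom_R(A^{k \bmod 2}, B^{k+n})$. Because $B$ is bounded, only finitely many $k$ contribute for each fixed $n$, so this product is actually a finite direct sum and splits as $\Hom_R(A^0, \bigoplus_{k \text{ even}} B^{k+n}) \oplus \Hom_R(A^1, \bigoplus_{k \text{ odd}} B^{k+n})$. Depending on the parity of $n$, these two factors collect into either $\Hom_R(A^0,F(B)^0) \oplus \Hom_R(A^1,F(B)^1) = \Hom^{\Zt}_R(A,F(B))^0$ or $\Hom_R(A^0,F(B)^1) \oplus \Hom_R(A^1,F(B)^0) = \Hom^{\Zt}_R(A,F(B))^1$. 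Since $P$ is by definition the $2$-periodic extension, this identifies the underlying bigraded modules. A short calculation then shows the differentials match: on both sides, a map is sent to $d_B \circ \varphi - (-1)^{|\varphi|}\varphi \circ d_A$ with $d_A$ the $\Zt$-differential, and the periodic extension does not alter this formula.

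For part (b), an analogous unpacking gives $(P(A) \otimes_R^{\Z} B)^n = \bigoplus_{k \in \Z} A^{k \bmod 2} \otimes_R B^{n-k}$, and again the boundedness of $B$ makes each sum finite. Separating the sum according to the parity of $k$ and using the definition of $F(B)$ identifies the result with the appropriate parity piece of $A \otimes_R^{\Zt} F(B)$, which is exactly $P(A \otimes_R^{\Zt} F(B))^n$. As before, the Koszul-signed differential $d \otimes 1 + (-1)^{|a|} 1 \otimes d$ is preserved under this identification. The statement about homotopy classes in (a) is then immediate by applying $\H^0$ after $2$-periodifying, since $\H^0(P(-)) = \H^0(-)$ for a $\Zt$-graded complex.

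The only subtle point worth flagging is the role of the boundedness hypothesis on $B$: without it the reindexing in (a) would replace a product by a direct sum and the claimed equality of complexes (not just of cohomologies) would fail. So the main bookkeeping step, rather than a genuine obstacle, is making sure the products/sums and parities line up correctly; the compatibility of differentials is then forced by the fact that $P$ and $F$ are both strict functors of dg categories that are mutually inverse on bounded $\Z$-graded objects.
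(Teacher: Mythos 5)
The paper states Proposition \ref{adjunction} as an observation and omits the proof entirely, so there is no argument in the paper to compare against; your direct verification by unpacking $P$ and $F$ degree-by-degree, splitting by the parity of the index, and then checking that the Koszul-signed differentials match is exactly the elementary bookkeeping the authors leave to the reader, and it is correct, including the observation that $\H^0(P(-)) = \H^0(-)$ gives the statement about homotopy classes. One small refinement: you invoke boundedness of $B$ in both parts, but it is only genuinely needed in (a), where it turns the infinite product $\prod_k \Hom_R(P(A)^k, B^{k+n})$ into a finite direct sum that can be pulled inside the $\Hom$ and regrouped into $F(B)^0 \oplus F(B)^1$; in (b) the tensor product is already a direct sum and $\otimes_R$ commutes with arbitrary direct sums, so the identification $\bigoplus_{i+j=n} A^{i \bmod 2} \otimes_R B^j \cong (A \otimes_R^{\Zt} F(B))^{n \bmod 2}$ holds without any boundedness hypothesis.
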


Thus we can translate diagram (\ref{diagram}) into the $\Zt$-graded setting in
virtue of the functors $P$ and $F$.

\subsection{(I) Koszul model}
We will use the Koszul model for the complex $\RG_\m(T(X,Y))$ in which both maps $f$ and $g$ become explicit. As explained in Section \ref{sect.loc}, this model is obtained as the tensor product
\begin{equation} \label{koszulmodel}
T(X,Y) \otimes_R K^{\infty}(t_1, t_2, \dots, t_n; R)\text{,}
\end{equation}
where $K^{\infty}(t_1, t_2, \dots, t_n; R)$ denotes the stable Koszul complex of a system of parameters $t_1, \dots, t_n$. 
As already pointed out above, since the singularity of $\Spec(R/w)$ is isolated, the
sequence of partial derivatives of $w$
\[
\underline{t} = \{ \partial_1 w, \partial_2 w, \dots, \partial_n w \}
\]
forms a system of parameters. We fix this system for the remainder of the
section. 
In view of Proposition \ref{adjunction}(b) we replace the complex (\ref{koszulmodel}) by the $\Zt$-graded
tensor product
\[
Z \otimes_R K\text{,}
\]
where $Z = \Hom_R(X,Y)$ denotes the $\Zt$-graded mapping complex and $K$ denotes
the $\Zt$-folding of the stable Koszul complex. Explicitly, we denote by
$(K,\delta)$ the $\Zt$-graded complex
\[
\bigotimes_{i=1}^n (R \to R_{t_i}\theta_i)
\]
where $\theta_i$ are odd bookkeeping variables. In the obvious way, we
introduce a graded-commutative multiplication on $K$. Then, the differential will 
simply be given by the left-multiplication
\[
\delta = \sum_i \theta_i\text{.}
\]
We think of the variables $\theta_i$ in $K$ as $1$-forms.

In this $\Zt$-graded context we will now make diagram
(\ref{diagram}) explicit. Even though local duality is of course the underlying motivation, we will not
apply any particular duality theorem, but rather reprove it explicitly in our specific situation.
The complex $Z \otimes_R K$ is a $\Zt$-graded model for $\RG_{\m}(T(X,Y))$, with respect to which we can give an explicit description of $f$ and $g$. 

We begin with the map
\[
g:\; \Hom(Y,X) \otimes_R K \lto \Hom(\Hom(Y,X[n]), \H^n_{\m}(R)) 
\]
which is obtained as a composition of various natural maps.
The tensor evaluation isomorphism is defined as
\begin{equation}\label{tensor-evaluation}
	\xi:\; Z \otimes_R K \lto \Hom(Z^{\vee}, K),\; F \otimes \omega \mapsto
	\left[ g \mapsto (-1)^{|g|(|\omega|+|F|)} g(F) \omega \right]\text{.}
\end{equation}
The cohomology of $K$ is concentrated in the $n$-form component, and we have
$\H^n(K) \cong \H^n_{\m}(R)[-n]$, so there is a quasi-isomorphism 
$\nu: K \to \H^n_{\m}(R)[-n]$.
The induced map
\[
\nu_*: \Hom(Z^{\vee}, K) \lra \Hom(Z^{\vee}, \H^n_{\m}(R)[-n])
\]
is a quasi-isomorphism as the complex $\cone(\nu_*)$ is acyclic. 
To see this, observe that the cone
of $\nu$ is the $\Zt$-folding of a bounded acyclic complex $C$. Thus, any
map from a $\Z$-graded complex $P$ into $C$ factors through a brutal truncation of
$P$ from above, and is therefore null-homotopic. The statement in the
$\Zt$-graded category now follows from Proposition \ref{adjunction}(a). 
Finally, there is a natural isomorphism of complexes
\[
\tau:\;	\Hom(Y,X) \lra \Hom(X,Y)^{\vee}, \; G \mapsto \tr(G \circ -).
\]
Here $\tr$ is the graded trace map, given by
\[
\tr: \; \Hom(X,X) \to R, \; \left( \begin{smallmatrix} A & B\\ C & D
\end{smallmatrix} \right) \mapsto \operatorname{trace}(A) -
\operatorname{trace}(D) \text{.}
\]
With this terminology, we have
\[
g = \tau^* \circ \nu_* \circ \xi \text{.}
\]
We now move on to study the map $f$.

\subsection{(II) Homotopy inverse of $f$} 

As above, we use the notation $(Z,d) = \Hom_R(X,Y)$ for the $\Zt$-graded mapping complex
in the category $\MF(R,w)$ and denote the $\Zt$-folded stable
Koszul complex by $(K,\delta)$ .
Thinking of the variables $\theta_i$ in $K$ as $1$-forms, the map 
\[
f:\; Z \otimes_R K \lto Z 
\]
is given by projection onto the $0$-form component. The following lemma is
well-known.

\begin{lem}\label{homotopies}
	Multiplication by $t_i = \partial_i w$ is null-homotopic on
	the complex $Z = \Hom_R(X,Y)$. If 
	\[
		Q = \left( \begin{array}{rr} 0 & \varphi\\ \psi & 0 \end{array} \right)
	\]
	represents the twisted differential of the matrix factorization $Y$,
	then postcomposition by $\partial_i Q$ provides a
	homotopy of $t_i$ with zero. In
	particular, the restriction of the complex $Z$ to $\Spec(R_{t_i})$ is
	contractible with contracting homotopy given by
	\[
	h_i = \frac{\partial_i Q}{t_i} \circ - \text{.}
	\]
\end{lem}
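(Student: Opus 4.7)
The plan is to show directly that the odd operator $h_i$ defined by postcomposition with $\partial_i Q$ satisfies $dh_i + h_i d = t_i \cdot \id_Z$, which immediately yields both assertions of the lemma.

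The key algebraic input is obtained by differentiating the matrix factorization identity $Q^2 = w \cdot \id$ with respect to $x_i$. The Leibniz rule gives
\[
	(\partial_i Q) \circ Q + Q \circ (\partial_i Q) = (\partial_i w) \cdot \id = t_i \cdot \id,
\]
since $\partial_i Q$ is an odd endomorphism of $Y$. This is the analogue in the matrix factorization setting of the familiar fact that the differential of a superpotential acts trivially on the Jacobian ring, and it is really the only nontrivial ingredient.

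Next I would unwind the definition of the differential on $Z = \Hom_R(X,Y)$. Writing $Q_X$ and $Q_Y$ for the twisted differentials of $X$ and $Y$, the $\Zt$-graded differential on $Z$ is
\[
	d(F) = Q_Y \circ F - (-1)^{|F|} F \circ Q_X.
\]
Setting $h_i(F) = \partial_i Q_Y \circ F$, a direct computation of $(dh_i + h_i d)(F)$ causes the two terms involving $F \circ Q_X$ to cancel (they carry opposite Koszul signs because $\partial_i Q_Y$ is odd), leaving precisely
\[
	\bigl(Q_Y \circ \partial_i Q_Y + \partial_i Q_Y \circ Q_Y\bigr) \circ F = t_i \cdot F
\]
by the identity above. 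Thus $h_i$ is the desired null-homotopy.

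For the final assertion, the restriction $Z \otimes_R R_{t_i}$ carries the same homotopy, except that $t_i$ is now a unit. Dividing by $t_i$ gives the operator $h_i/t_i = (\partial_i Q)/t_i \circ -$, which satisfies $d(h_i/t_i) + (h_i/t_i)d = \id$ and thus contracts $Z|_{\Spec(R_{t_i})}$. I do not anticipate any substantial obstacle; the only point requiring care is the Koszul sign bookkeeping in the computation of $dh_i + h_i d$, where the oddness of $\partial_i Q_Y$ must be tracked consistently.
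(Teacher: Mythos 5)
Your proposal is correct and follows exactly the same route as the paper: differentiate $Q^2 = w\cdot\id$ to obtain $(\partial_i Q)\,Q + Q\,(\partial_i Q) = t_i\,\id$, then verify directly that $h_i(F) = \partial_i Q_Y \circ F$ satisfies $dh_i + h_i d = t_i\cdot\id_Z$, with the $F\circ Q_X$ terms cancelling by Koszul signs. The paper compresses the second half into ``this implies all assertions,'' and your unwinding of the sign cancellation is the correct and expected elaboration of that phrase.
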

\begin{proof} The relation $Q^2 = w$ implies by the Leibniz rule
	\[
	\frac{\partial Q}{\partial x_i} Q + Q \frac{\partial Q}{\partial x_i} =
	\frac{\partial w}{\partial x_i} \id_X
	\]
	for every $1 \le i \le n$. This implies all assertions.
\end{proof}

Note that, ignoring the Koszul differential $\delta$ for now, we can split
\[
(Z \otimes_R K, d\otimes 1) \cong (Z,d) \oplus (Z \otimes \bigoplus_{\substack{i_1 < \dots <
i_l\\ l >0}} R_{t_{i_1} \cdots t_{i_l}}\theta_{i_1} \cdots \theta_{i_l}, d\otimes 1)
\]
where the right-hand side summand is contractible. Indeed, by 
combining the homotopies from Lemma \ref{homotopies} we can form the contracting
homotopy
\begin{align}\label{average}
h = \bigoplus_{i_1 < \dots < i_l} \frac{1}{l}\sum_{k=1}^l h_{i_k}
\text{.}
\end{align}
In other words, we obtain a deformation retract datum 
\[
\left[\xymatrix@C+1pc{
(Z,d) \ar@<+.5ex>[r]^(0.37){\iota} & \ar@<+.5ex>[l]^(0.63){f}
(Z \otimes_R K, d) \text{,}  \; -h
}\right]
\]
where $\iota$ is the canonical inclusion. 
Considering the differential $d + \delta$ on the complex $Z \otimes K$ as a perturbation of $d$, we apply Lemma \ref{basic} to otbain the perturbed deformation retract
\[
\left[\xymatrix@C+1pc{
(Z ,d) \ar@<+.5ex>[r]^(0.35){\iota_\infty} & \ar@<+.5ex>[l]^(0.65){f}
(Z \otimes_R K,d + \delta) \text{,}  \; h_\infty
}\right]
\]
where
\[
\iota_\infty = \iota + \sum_{j \ge 0} (-h)(\delta (-h))^j \delta \iota = \sum_{j
\ge 0} (-h \delta)^{j} \iota.
\]
In particular, $\iota_\infty$ represents the desired inverse to $f$ in the homotopy category of $R$-modules. 

With this calculation of the inverse in hand, we now pursue a concrete description of the composite $g \circ \iota_{\infty}$, which is the quasi-isomorphism
\begin{equation}\label{eq:composite_g_iota}
\xymatrix@C+1pc{
Z \ar[r]^(0.45){\iota_\infty} & Z \otimes K \ar[r]^(0.4){\xi} 
& \Hom( Z^{\vee} , K)\ar[r]^(0.4){\tau^* \circ \nu_*}  & \Hom(T(Y,X[n]), H^n_{\m}(R))}.
\end{equation}
Since this involves the projection $\nu$ the only relevant summand of
$\iota_{\infty}$ is the one mapping to the $n$-form component of $Z \otimes_R K$, given by $(-1)^{n}(h\delta)^n \iota$. More precisely, there is a commutative diagram
\begin{equation}\label{eq:composite_g_iota_2}
\xymatrix{
Z \otimes R_{t_1 \cdots t_n}\theta_1 \cdots \theta_n \ar[r] & Z \otimes
H^n_{\m}(R)[-n] \ar[d]^{\cong}\\
Z \ar[u]^{(-1)^n (h\delta)^n \iota} \ar[r]^(0.3){g \circ \iota_\infty} & \Hom(T(Y,X[n]), H^n_{\m}(R))
} \text{.}
\end{equation}
To evaluate $(h \delta)^n \iota$, recall that the differential $\delta$ on the
stable Koszul complex is given by the left-multiplication $\delta = \sum
\theta_i$.
Thus, we calculate
\begin{equation}\label{eq.composite}
\begin{split}
(-1)^n(h\delta)^n \iota (F) &= (-1)^n \frac{1}{n!} \sum_{\sigma \in S^n} 
\frac{\partial_{\sigma(1)} Q \theta_{\sigma(1)} \partial_{\sigma(2)} Q
\theta_{\sigma(2)} \cdots \partial_{\sigma(n)} Q\theta_{\sigma(n)}
F}{\partial_1 w \cdots \partial_n w} + \rho\\
&= (-1)^{n|F| + {n+1 \choose 2}} \frac{1}{n!} \sum_{\sigma \in S^n} \sign(\sigma)
\frac{\partial_{\sigma(1)} Q  \partial_{\sigma(2)} Q
\cdots
\partial_{\sigma(n)}Q 
F}{\partial_1 w \cdots \partial_n w}\theta_1\cdots\theta_n + \rho\text{.}
\end{split}
\end{equation}
Here the remainder $\rho$ consists of terms whose denominator is not divisible by
$\partial_1w \cdots \partial_nw$. Therefore, $\rho$ will be annihilated by the
residue map $\Res_*$ and can thus be neglected.

\subsection{(III) Grothendieck residues}
The final step of the argument will make use of Grothendieck's residue
symbol. 

\begin{lem} The cohomology modules of the mapping complex $\Hom_R(X,Y)$ are
	$R$-modules of finite length.
\end{lem}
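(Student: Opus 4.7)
The plan is to show that each cohomology module of $\Hom_R(X,Y)$ is both finitely generated over $R$ and annihilated by an $\m$-primary ideal, which forces finite length. The first property is immediate: since $X$ and $Y$ are finite free $R$-modules, the complex $\Hom_R(X,Y)$ consists of finite free $R$-modules in each $\Zt$-degree, so its cohomology is finitely generated over the Noetherian ring $R$.

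The second property is where Lemma \ref{homotopies} enters. That lemma provides, for each $1 \le i \le n$, an explicit null-homotopy of multiplication by $t_i = \partial_i w$ on $Z = \Hom_R(X,Y)$ via the contracting homotopy $h_i = (\partial_i Q)/t_i \circ -$ (or equivalently via precomposition by a suitable derivative of the twisted differential of $X$). Consequently, each $\partial_i w$ acts as zero on the cohomology of $Z$, and therefore the cohomology is annihilated by the ideal $\underline{t}\,R = (\partial_1 w,\ldots,\partial_n w)$.

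To conclude, I would invoke the isolated singularity hypothesis on $(R,w)$ in the form already used in this section: the sequence $\underline{t}$ is a system of parameters for the regular local ring $R$ of Krull dimension $n$, so the ideal $\underline{t}\,R$ is $\m$-primary. Hence $H^\ast(\Hom_R(X,Y))$ is a finitely generated module over the Artinian local ring $R/\underline{t}\,R$, and therefore of finite length over $R$. There is no real obstacle here; the only point to highlight is that Lemma \ref{homotopies} together with the isolated singularity assumption, which is precisely what guarantees that $\underline{t}$ is a system of parameters (cf.\ the reference to \cite{looijenga} in the opening of Section \ref{sect.kapli}), combine to give the result in a few lines.
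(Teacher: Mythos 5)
Your argument is correct and essentially identical to the paper's: both use Lemma \ref{homotopies} to show that the partial derivatives $\partial_i w$ annihilate the cohomology, so that the cohomology modules are finitely generated over the Milnor algebra $\Omega_w = R/(\partial_1 w,\ldots,\partial_n w)$, which is Artinian (equivalently, finite-dimensional over $k$) by the isolated singularity hypothesis. The only cosmetic difference is that you phrase the last step via ``$\m$-primary ideal'' and ``system of parameters'' where the paper says ``finite dimensional over $k$,'' but these are the same observation.
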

\begin{proof}
	By Lemma \ref{homotopies} the partial derivatives $\partial_i
	w$ act trivially on the cohomology of the complex
	$\Hom(X,Y)$. Therefore, the cohomology modules are modules
	over the Milnor algebra
	\[
	\Omega_w \cong R / (\partial_1  w, \dots,
	\partial_n w) \text{.}
	\]
	However, the Milnor algebra is finite dimensional over $k$, since we
	assume that the singularity of $\Spec(R/w)$ is isolated. Because the
	cohomology modules are finitely generated $R$-modules, and thus finitely
	generated $\Omega_w$-modules, the claim follows.
\end{proof}
	
In combination with (\ref{dualequiv}) from Section \ref{sect.residue}, the lemma implies that the map $\Res_*$ in
the diagram (\ref{diagram}) is a quasi-isomorphism. In view of 
(\ref{eq.composite}) this leads us to the Kapustin-Li formula: the composition $\Res_* \circ g \circ \iota_{\infty}$ provides an explicit
quasi-isomorphism
	\[
	\Hom(X,Y) \stackrel{\simeq}{\lra} \Hom_k(\Hom(Y,X[n]),k)\text{.}
	\]
We reformulate this statement in terms of the corresponding pairing. 
We use the notation
	\[
	(dQ)^{\wedge n} = \sum_{\sigma \in S_n} \sign(\sigma) 
	\partial_{\sigma(1)} Q \cdots \partial_{\sigma(n)} Q\text{,}
	\]
	where $Q$ is the twisted differential corresponding to $X$.

\begin{theo}\label{kl}
The pairing
	\[
	\Hom(X,Y) \otimes_R \Hom(Y,X[n]) \to k,\; (F,G) \mapsto 
	(-1)^{n+1 \choose 2}\frac{1}{n!}\Res \left[ \begin{array}{c} 
		\tr(FG(dQ)^{\wedge n})\\
		\partial_1 w, \partial_2 w,\; \cdots , \partial_n w \end{array}  \right] 
	\]
	provides a homologically non-degenerate pairing on the morphism
	complexes of the category of matrix factorizations $\MF(R,w)$
	associated to the local germ of an isolated hypersurface singularity.
\end{theo}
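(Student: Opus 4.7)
The plan is to realize the pairing as the bottom map in diagram (\ref{diagram}), obtained by inverting $f$ up to homotopy and then composing with $g$ and $\Res_*$. All three maps become quasi-isomorphisms in the setting at hand, so non-degeneracy is automatic; the entire work lies in making the composition explicit on the chain level and tracking the resulting signs, following the three steps (I)--(III) announced in the introduction, translated to the $\Zt$-graded world via Proposition \ref{adjunction}.

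For step (I), I would set $Z = \Hom_R(X,Y)$, let $(K,\delta)$ denote the $\Zt$-folded stable Koszul complex on $\underline{t} = \{\partial_1 w, \ldots, \partial_n w\}$, and adopt $Z \otimes_R K$ as the model of $\RG_\m \Hom(X,Y)$. In this model, $f$ is the projection onto the $0$-form summand, while $g = \tau^* \circ \nu_* \circ \xi$, where $\xi$ is the tensor evaluation (\ref{tensor-evaluation}), $\nu \colon K \to \H^n_\m(R)[-n]$ is the natural quasi-isomorphism onto top cohomology, and $\tau \colon \Hom(Y,X) \to \Hom(X,Y)^\vee$ is the trace-composition map. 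The fact that $\nu_*$ remains a quasi-isomorphism on $\Hom(Z^\vee,-)$ is handled by the brutal truncation argument, combined with Proposition \ref{adjunction}(a) to pass between the $\Z$- and $\Zt$-graded settings.

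For step (II), Lemma \ref{homotopies} supplies the contracting homotopy $h_i = (\partial_i Q / t_i) \circ -$ on $Z_{t_i}$, and I would combine these on the multi-index components of $K$ by the symmetric average (\ref{average}) to produce a deformation retract datum between $(Z,d)$ and $(Z \otimes_R K, d \otimes 1)$ with retraction $f$ and canonical inclusion $\iota$. Regarding $\delta = \sum_i \theta_i$ as a filtration-lowering perturbation, Lemma \ref{basic} yields the perturbed inclusion $\iota_\infty = \sum_{j \ge 0} (-h\delta)^j \iota$ as a homotopy inverse of $f$. Since $g$ factors through the projection onto the $n$-form component, only the term $(-h\delta)^n \iota$ contributes up to a remainder $\rho$ annihilated by $\Res_*$, and expanding this using the explicit formulas for $h$ and $\delta$ produces the alternating sum $\frac{1}{n!} \sum_\sigma \sign(\sigma)\, \partial_{\sigma(1)} Q \cdots \partial_{\sigma(n)} Q$ divided by $\partial_1 w \cdots \partial_n w$ displayed in (\ref{eq.composite}).

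For step (III), the preceding lemma shows that the cohomology of $\Hom(X,Y)$ is a module over the Milnor algebra $\Omega_w \cong R/(\partial_1 w, \ldots, \partial_n w)$, hence of finite length, so (\ref{dualequiv}) makes $\Res_*$ a quasi-isomorphism. Reading off $\Res_* \circ g \circ \iota_\infty$ via the generalized fraction representation of residues produces the trace (from $\tau$), the denominator $\partial_1 w \cdots \partial_n w$ (from $h$), and the antisymmetrized product $(dQ)^{\wedge n}$ (from the sum over permutations), giving the formula asserted in the theorem. The main obstacle is the sign bookkeeping: the Koszul signs in (\ref{tensor-evaluation}), the parity prefactor $(-1)^{n|F|}$ arising when commuting $F$ past the $\theta_i$ in the expansion of $(h\delta)^n\iota$, and the $(-1)^n$ from the $n$-th power of $-h$ must all be reconciled to land on the stated coefficient $(-1)^{\binom{n+1}{2}}/n!$.
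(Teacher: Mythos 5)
Your proposal reproduces the paper's own proof essentially step for step: the $\Zt$-folded stable Koszul model $Z \otimes_R K$ for $\RG_\m\Hom(X,Y)$ with $f$ the $0$-form projection and $g = \tau^* \circ \nu_* \circ \xi$; the homotopies $h_i$ from Lemma \ref{homotopies} averaged as in (\ref{average}) and perturbed via Lemma \ref{basic} to yield $\iota_\infty = \sum_{j\ge 0}(-h\delta)^j\iota$ with only the $n$-form term surviving modulo the remainder $\rho$; and the finite-length argument making $\Res_*$ a quasi-isomorphism, after which the formula follows from (\ref{eq.composite}) and careful sign bookkeeping. This is the same route, decomposition, and sequence of lemmas as in the paper, with no material deviation.
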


\begin{proof}
We simply have to evaluate the composition 	
\[
\Res_* \circ g \circ \iota_{\infty}(F)(G)
\]
keeping track of the Koszul signs. Careful sign bookkeeping yields
\begin{align*}
\Res_* \circ g \circ \iota_{\infty}(F)(G) &= 
(-1)^{n|F| + { n+1 \choose 2} + 2n|G| + |F||G|} \frac{1}{n!} \Res \left[ \begin{array}{c} 
		\tr(G(dQ)^{\wedge n}F)\\
		\partial_1 w, \partial_2 w,\; \cdots , \partial_n w \end{array}
		\right] \\
		& = 
(-1)^{ n+1 \choose 2} \frac{1}{n!} \Res \left[ \begin{array}{c} 
		\tr(FG(dQ)^{\wedge n})\\
		\partial_1 w, \partial_2 w,\; \cdots , \partial_n w \end{array}
		\right]  \text{.}
\end{align*}
\end{proof}

We conclude this section with a comparison to the approach in \cite{murfet-2009}. For an isolated Gorenstein singularity $A$ the analogue of the category $\MF(R,w)$ is the stable category of maximal Cohen-Macaulay (CM) modules $\underline{\operatorname{CM}}(A)$, and following Buchweitz \cite{buchweitz} we identify this category with the homotopy category of acyclic complexes of finitely generated free $A$-modules. The equivalence sends a CM module to its complete free resolution which, viewed as an infinite sequence of matrices, generalises the notion of a matrix factorization (which may be viewed as a two-periodic complete free resolution). In \cite{murfet-2009} the perturbation lemma is applied to these complete free resolutions to obtain explicit complete injective resolutions, which give rise to a duality isomorphism in $\underline{\operatorname{CM}}(A)$ specialising to the Kapustin-Li formula when $A = R/w$.

In the present article we exploit the fact that in the hypersurface case we can apply local duality and the perturbation lemma directly to the morphism complexes $T(X,Y)$, which allows us to avoid the introduction of complete injective resolutions.

\section{The Boundary-Bulk Map} \label{sect.bb}

The work in this section should be seen as an addendum to the results in
\cite{polishchuk}. We establish an explicit formula for the boundary-bulk map
which is adapted to the Kapustin-Li formula in Theorem \ref{kl} on the chain level.

\subsection{Morita-theoretic construction}
We recall the definition of Hochschild homology in the context of To\"en's
derived Morita theory for dg categories (\cite{toen.morita, toen.lectures}). More precisely, we will use the $2$-periodic
variant defined in \cite[4.1]{dyck4}. Let $T$ be a $2$-periodic dg
category which we may consider as a module over $T^{\op}\otimes T$ via
\[
T^{\op}\otimes T \to C(\ku), (x,y) \mapsto T(x,y)
\]
for objects $x,y$ in $T$.
By \cite[Theorem 7.2]{toen.morita}, this map has a continuous extension
\[
\underline{\tr}: \widehat{T^{\op}\otimes T} \to C(\ku)
\]
which is unique up to homotopy. The induced map on homotopy categories yields
\[
[\underline{\tr}]: \D(T^{\op}\otimes T) \to \D(\ku)
\]
and the Hochschild homology of the category $T$ is defined to be $[\underline{\tr}](T)$.
Thus, the Hochschild homology is the trace of the identity functor which we may choose 
to think of as the dimension of $T$.

Now, every object $e$ in $T$, defines an object $(e,e)$ of $T^{\op}\otimes T$
which induces a representable functor
\[
h^{(e,e)}: T^{\op}\otimes T \to C(\ku), (x,y) \mapsto T(x,e) \otimes
T(e,y) \text{.}
\]
The composition law in $T$ provides us with a natural map
\begin{equation}
\label{natural.map}
\pi_e:\; h^{(e,e)} \to T
\end{equation}
in $\D(T^{\op}\otimes T)$ and thus we obtain an induced map
\begin{equation}\label{boundary-bulk}
[\underline{\tr}](\pi_e):\; T(e,e) \simeq [\underline{\tr}](h^{(e,e)}) \to [\underline{\tr}](T) \simeq \HH(T)
\end{equation}
in $\D(\ku)$, which we call the \emph{boundary-bulk map}.

The derived Morita theory developed by To\"en can be used in the context of
matrix factorization categories to calculate the Hochschild chain complex $\HH(T)$ (see \cite{dyck4}). In
recent work of Polishchuk and Vaintrob \cite{polishchuk}, an explicit formula for the
boundary-bulk map is calculated. We will provide a (homotopic) variant of the formula which 
is better adapted to the form of the Kapustin-Li pairing from Theorem \ref{kl}.
Following the method in [ibid.], we will describe the map in the context of
derived Morita theory.
The compatibility between the Kapustin-Li pairing and the boundary-bulk map 
will lead to the existence of an oriented $2$-dimensional
topological quantum field theory as discussed in Section \ref{sect.topfield}.

As in the previous sections, we fix the notation $T = \MF(R,w)$. We also introduce $\widetilde{R} =
R\otimes_k R$ and $\widetilde{w} = w(y) - w(x)$. Using the results of \cite{dyck4}, we have
an equivalence
\begin{equation}\label{morita.equiv}
	\D(T^{\op}\otimes T) \simeq \MF^{\infty}(\widetilde{R}, \widetilde{w})\text{.}
\end{equation}
Given a matrix factorization $E$ in the category $\MF(R,w)$, the
representable module which corresponds under the above equivalence to
$h^{(E,E)}$ is $E_x^{\vee} \boxtimes E_y$. By \cite[Corollary 5.4]{dyck4}, the identity
functor is represented by the stabilized diagonal $\Delta^{stab}$.
We have to identify the natural map
\[
\varphi_E:\; E_x^{\vee} \boxtimes E_y \to \Delta^{\stab}
\]
in $\MF^{\infty}(\widetilde{R}, \widetilde{w})$ which corresponds under the
equivalence (\ref{morita.equiv}) to the map $\pi_E$.
By \cite[Lemma 5.3]{dyck4}, there is a
quasi-isomorphism
\begin{equation} \label{lem53}
\Hom_{\widetilde{R}}(E_x^{\vee} \boxtimes E_y, \Delta^{stab}) \simeq \Hom_{\widetilde{R}}(E_x^{\vee} \boxtimes E_y, R)
\end{equation}
where $R$ is the diagonal $\widetilde{R} / \widetilde{w}$-module. The symbol
$\Hom_{\widetilde{R}}$ refers to the $\Zt$-graded $\widetilde{R}$-linear mapping complex and we
keep this convention throughout this section. The right-hand
side complex is in turn quasi-isomorphic to the $R$-linear mapping complex $\Hom_R(E,E)$.

\begin{lem}[\cite{polishchuk}] \label{lift} 
	The composition map $\pi_E$  corresponds to the unique class
	$\varphi_E$ in $\Hom_{\widetilde{R}}(E_x^{\vee} \boxtimes E_y, \Delta^{stab})$ which maps to $\id_E$
	under the quasi-isomorphism (\ref{lem53}).
\end{lem}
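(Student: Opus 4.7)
My plan is to reduce the identification of $\varphi_E$ to a Yoneda-type argument and then transport everything under the Morita equivalence (\ref{morita.equiv}).

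The first step is to observe that in $\D(T^{\op}\otimes T)$, the bimodule $h^{(E,E)}$ is representable and therefore carries a canonical class, namely $\id_E \otimes \id_E \in T(E,E)\otimes T(E,E) = h^{(E,E)}(E,E)$. By the (derived) Yoneda lemma for $T^{\op}\otimes T$-modules, for any bimodule $M$ the evaluation map
\[
\Hom_{T^{\op}\otimes T}(h^{(E,E)}, M) \;\stackrel{\simeq}{\lra}\; M(E,E),\qquad \alpha \mapsto \alpha_{(E,E)}(\id_E\otimes \id_E)
\]
is a quasi-isomorphism. The composition map $\pi_E$ of (\ref{natural.map}), evaluated at $(E,E)$, sends $\id_E \otimes \id_E$ to $\id_E \in T(E,E)$. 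Hence $\pi_E$ is uniquely characterized, as a morphism in $\D(T^{\op}\otimes T)$, by the condition that it maps the canonical Yoneda element to $\id_E$.

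Next I translate this characterization under the equivalence (\ref{morita.equiv}). By the results of \cite{dyck4} referenced in (\ref{lem53}), $h^{(E,E)}$ corresponds to the external tensor product $E_x^{\vee}\boxtimes E_y$ (the representable Morita bimodule), while the identity bimodule $T$ corresponds to the stabilized diagonal $\Delta^{\stab}$. The quasi-isomorphism (\ref{lem53}), followed by the further identification $\Hom_{\widetilde R}(E_x^{\vee}\boxtimes E_y, R)\simeq \Hom_R(E,E)$ via the standard tensor–hom adjunction, is precisely the Morita translation of the Yoneda evaluation map displayed above. Under this chain of identifications the canonical element $\id_E\otimes \id_E$ on the derived-category side is sent to $\id_E \in \Hom_R(E,E)$ on the matrix-factorization side.

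Combining these two steps: the class $\varphi_E$ corresponding to $\pi_E$ under (\ref{morita.equiv}) is the unique class in $\Hom_{\widetilde R}(E_x^{\vee}\boxtimes E_y, \Delta^{\stab})$ whose image under (\ref{lem53}) equals $\id_E$, which is exactly the statement of the lemma. The main point to verify carefully is that the chain of quasi-isomorphisms in (\ref{lem53}) really is compatible with the Morita translation of Yoneda evaluation; I expect this to be the only nontrivial bookkeeping, and it can be handled by direct inspection of the constructions of \cite{dyck4} used to produce (\ref{morita.equiv}) and (\ref{lem53}), since both are built from the same tensor–hom adjunction at the representable $E_x^{\vee}\boxtimes E_y$.
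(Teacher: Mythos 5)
Your proposal is correct and is essentially the same argument as the paper's. The paper likewise observes that the composition functor $T(-,E)\otimes T(E,-)\to T(-,-)$ is characterized by sending $\id\otimes\id$ to $\id$, transports this characterization under the Morita equivalence, and then closes the argument by noting that $(\varphi_E)_*(\id_{E_x^{\vee}\boxtimes E_y})=\varphi_E$, so that the characterizing condition literally says $\varphi_E$ maps to $\id_E$ under (\ref{lem53}); your phrasing via the derived Yoneda evaluation and the canonical element $\id_E\otimes\id_E$ is the same reasoning, just stated slightly more explicitly.
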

\begin{proof}
	The composition functor
	\[
	T(-,E) \otimes T(E,-) \to T(-,-)
	\]
	is uniquely characterized by the property that it maps $\id \otimes \id$ in
	$T(E,E) \otimes T(E,E)$ to the identity in $T(E,E)$. 
	We interpret this statement in the category $\MF^{\infty}(\widetilde{R},
	\widetilde{w})$: The map
	\[
	\varphi_E:\; E_x^{\vee} \boxtimes E_y \to \Delta^{\stab} 
	\]
	is characterized by the property that the induced map
	\[
	(\varphi_E)_*:\; \Hom_{\widetilde{R}}(E_x^{\vee} \boxtimes E_y,E_x^{\vee} \boxtimes E_y) \to
	\Hom_{\widetilde{R}}(E_x^{\vee} \boxtimes E_y, \Delta^{\stab}) \cong \Hom_R(E,E),\; f
	\mapsto \varphi_E \circ f
	\]
	maps $\id_{E_x^{\vee} \boxtimes E_y}$ to $\id_E$. But this proves
	the claim, since $(\varphi_E)_*(\id) = \varphi_E$.
\end{proof}

To find $\varphi_E$ we therefore have to find an explicit homotopy inverse of
the quasi-isomorphism (\ref{lem53}). Again, we will use the basic perturbation
lemma to provide a solution to this problem.
We first introduce some notation.
Recall (\cite[2.3]{dyck4}) that we may represent 
\[
\Delta^{\stab} = (\tR\left<\theta_1, \dots, \theta_n\right>, \iota_{\Delta} +
\epsilon_{\lambda})
\]
where $\widetilde{w} = w_1 \Delta_1 + \dots + w_n \Delta_n$, $\iota_{\Delta}$ is
contraction with the element $\Delta_1 \theta_1^{\vee} + \dots + \Delta_n
\theta_n^{\vee}$ and $\epsilon_{\lambda}$ denotes exterior left multiplication with the
element 
\begin{equation}\label{1form}
	\lambda = w_1 \theta_1 + \dots + w_n \theta_n \text{.}
\end{equation}
Note that the coefficients
$\{w_i\}$
are not unique, different choices of $\{w_i\}$ will lead to different (but isomorphic)
models of $\Delta^{\stab}$. As explained below we will make a specific canonical
choice for these coefficients during the calculation. 
We reformulate
\[
\Hom_{\widetilde{R}}(E_x^{\vee} \boxtimes E_y, \Delta^{\stab}) \cong \Hom_{\widetilde{R}}(E_y, E_x) \otimes_{\tR}
\tR \left< \theta_1, \dots, \theta_n\right>
\]
which allows us to think of elements in $\Hom_{\widetilde{R}}(E_x^{\vee} \boxtimes E_y,
\Delta^{\stab})$ as supermatrix-valued differential forms. For a homogeneous
element
\[
A \otimes \eta \in \Hom_{\widetilde{R}}(E_y, E_x) \otimes_{\tR} \tR \left< \theta_1, \dots, \theta_n\right>
\]
we define
\begin{align*}
	d_Q(A \otimes \eta) & = (Q_x A - (-1)^{|A|} A Q_y) \otimes \eta\\
	\iota_{\Delta}(A \otimes \eta) & = (-1)^{|A|} A \otimes
	\iota_{\Delta}(\eta)\\
	\epsilon_{\lambda} (A \otimes \eta) & = (-1)^{|A|} A \otimes
	\epsilon_{\lambda}(\eta) \text{.}
\end{align*}
Here, we use the notation $Q_x = Q(x)$ and $Q_y = Q(y)$ to minimize the number of brackets. The differential on the complex
\[
\Hom_{\widetilde{R}}(E_y, E_x) \otimes_{\tR} \tR \left< \theta_1, \dots, \theta_n \right>
\]
is then given by $d_Q + \epsilon_{\lambda} + \iota_{\Delta}$. Note that we may
interpret this differential as a perturbation of the Koszul differential
$\iota_{\Delta}$ by $\delta = d_Q + \epsilon_{\lambda}$. This allows us to apply
the basic perturbation lemma. Indeed, the Koszul complex
\begin{equation} \label{koszul}
(\tR \left< \theta_1, \dots, \theta_n \right>, \iota_{\Delta})
\end{equation}
has cohomology $R$ concentrated in degree $0$ which allows us to define the
deformation retract
\[
\left[\xymatrix{
 (R,0) 
\ar@<+.5ex>[r]^(0.3){\iota} & \ar@<+.5ex>[l]^(0.7){p} 
(\tR \left< \theta_1, \dots, \theta_n \right>,
\iota_{\Delta})\text{,} \; -H }\right] \text{.}
\]
Here, $p$ is the projection onto the cohomology, $\iota$ is the inclusion of $R$ into
the first component of $\widetilde{R}$ and $H$ is a $k$-linear homotopy which contracts the complex (\ref{koszul}) onto its
cohomology. Observe that we have an isomorphism of graded vector spaces
\[
\Hom_{\widetilde{R}}(E_y, E_x) \otimes_{\tR} \tR \left< \theta_1, \dots, \theta_n \right>
\cong \Mat_n(k) \otimes_k \tR \left< \theta_1, \dots, \theta_n \right>
\]
where $r = \rank(E)$ and $\Mat_r(k)$ denotes the $\Zt$-graded vector space of
$r$-by-$r$ super matrices. This allows us to extend the above deformation
retract linearly to obtain the retract
\[
\left[\xymatrix{
(\Hom_{\widetilde{R}}(E_y,E_x) \otimes_{\tR} R, 0) 
\ar@<+.5ex>[r]^(.37){\iota} & \ar@<+.5ex>[l]^(.63){p} 
(\Hom_{\widetilde{R}}(E_y, E_x) \otimes_{\tR} \tR \left< \theta_1, \dots, \theta_n \right>,
\iota_{\Delta})\text{,} \; -H }\right] \text{.}
\]
The basic perturbation lemma allows us to perturb this retract by $\delta = d_Q
+ \epsilon_{\lambda}$ to obtain the following result.

\begin{prop}\label{bb.homotopyinverse}
	A homotopy inverse of the projection map 
	\[
	(\Hom_{\widetilde{R}}(E_y, E_x) \otimes_{\tR} \tR \left< \theta_1, \dots, \theta_n
	\right>, d_Q + \iota_{\Delta} + \epsilon_{\lambda})  \to (\Hom_R(E, E),
	d_Q)
	\]
	is given by the map
	\[
	\iota_{\infty} = \sum_{k=0}^n (-H \delta)^k \iota \text{.}
	\]
\end{prop}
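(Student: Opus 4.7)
The proposition is a direct application of the basic perturbation lemma (Lemma~\ref{basic}) to the deformation retract datum displayed immediately before the statement, with perturbation $\delta := d_Q + \epsilon_{\lambda}$ of the Koszul differential $\iota_{\Delta}$. The plan is to verify the hypotheses of Lemma~\ref{basic}, invoke it, and then simplify the output to the stated closed form.

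The key hypothesis to check is $(\iota_{\Delta} + d_Q + \epsilon_{\lambda})^2 = 0$. Expanding via the graded Leibniz rule, the individual squares give $\iota_{\Delta}^2 = \epsilon_{\lambda}^2 = 0$ and $d_Q^2 = -\widetilde{w}$ (a direct calculation from $Q_x^2 = w(x)$ and $Q_y^2 = w(y)$). The anticommutator $\{\iota_{\Delta}, \epsilon_{\lambda}\}$ equals $\widetilde{w}$ by the matrix-factorization identity defining $\Delta^{\stab}$, cancelling $d_Q^2$. The remaining cross-terms $\{d_Q, \iota_{\Delta}\}$ and $\{d_Q, \epsilon_{\lambda}\}$ vanish because $d_Q$ acts on $\Hom_{\tR}(E_y, E_x)$ while $\iota_{\Delta}$ and $\epsilon_{\lambda}$ act on $\tR\langle\theta_1,\ldots,\theta_n\rangle$, so they are odd operators on disjoint tensor factors; a short check using the Koszul-sign extensions recorded in the paper confirms this. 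For the convergence/filtration hypothesis, I would filter by form degree in the $\theta_i$: the contracting homotopy $H$ strictly raises form degree by one, while $\delta$ preserves or raises form degree, so $H\delta$ strictly raises form degree and $(H\delta)^{n+1}$ annihilates the image of $\iota$, which sits in form degree zero.

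With these hypotheses verified, Lemma~\ref{basic} produces the homotopy inverse $\iota_{\infty} = \iota + h\psi\iota$ with $h = -H$ and $\psi = \sum_{j\ge 0}(\delta h)^j\delta$. A short sign collapse using $(-H)(\delta(-H))^j\delta = (-H\delta)^{j+1}$ rewrites this as $\iota_{\infty} = \sum_{k \ge 0}(-H\delta)^k\iota$, and by the form-degree observation the sum truncates at $k=n$, giving the stated formula. The main obstacle I expect is sign bookkeeping: confirming that the Leibniz extensions of $d_Q$, $\iota_{\Delta}$, $\epsilon_{\lambda}$ to the tensor product satisfy the anticommutation relations used above, and verifying that the induced differential $p\psi\iota$ on the left-hand complex is the standard $d_Q$ on $\Hom_R(E,E)$ (which reduces to noting that only the $j=0$ term survives $p$, since higher terms lie in positive form degree, and that $p$ sets $y=x$ so that $Q_y$ becomes $Q_x$). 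These are mechanical checks; the mathematical content is routine once the BPL framework is in place.
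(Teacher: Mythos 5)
Your proposal is correct and takes the same route as the paper, which simply invokes the basic perturbation lemma on the deformation retract displayed just before the proposition. You supply the verifications the paper leaves implicit — that $(\iota_{\Delta}+d_Q+\epsilon_{\lambda})^2=0$ via $d_Q^2=-\widetilde w$ cancelling $\{\iota_{\Delta},\epsilon_{\lambda}\}=\widetilde w$, the nilpotency of $H\delta$ coming from $H$ strictly raising form degree, the sign collapse to $\sum_{k}(-H\delta)^k\iota$, and the identification of $p\psi\iota$ with $d_Q$ on $\Hom_R(E,E)$ — all of which are sound.
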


To find an explicit formula for $\iota_{\infty}$, we thus have to construct an
explicit contracting homotopy $H$ of the Koszul complex (\ref{koszul}). We will
do this in the next section, but before that, let us conclude how to obtain
the boundary-bulk map from $\iota_{\infty}(\id)$.

\begin{lem} \label{bulk.lemma}
	Let $\eta = \iota_{\infty}(\id)$ with $\eta_n = B \theta_1 \dots \theta_n$.
	Then the boundary-bulk map is given explicitly by
	\[
	[\underline{\tr}](\pi_E): \; \Hom_R(E,E) \to \Omega_{w}[n],\; F \mapsto
	(-1)^{n|F|} \tr( B F ) \text{.}
	\]
\end{lem}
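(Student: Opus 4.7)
The strategy is to combine Lemma \ref{lift}, which identifies $\pi_E$ at the chain level with the class $\varphi_E = \iota_{\infty}(\id)$ constructed in Proposition \ref{bb.homotopyinverse}, with an explicit description of the trace functor $[\underline{\tr}]$ on matrix factorization bimodules. Under the equivalence (\ref{morita.equiv}) of \cite{dyck4}, the trace of a bimodule $M \in \MF^{\infty}(\widetilde{R}, \widetilde{w})$ is computed by restricting along the diagonal $x = y$ (which turns $\widetilde{R}$ into $R$ and kills $\widetilde{w}$) and projecting onto the top-form component of the $\theta$-algebra, thereby producing the identification $\HH(T) \simeq \Omega_w[n]$.

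To evaluate the boundary-bulk map on a given $F \in \Hom_R(E,E)$, I would first lift $F$ to an endomorphism of the representable bimodule $E_x^{\vee} \boxtimes E_y$, postcompose with $\varphi_E$ to land in $\Hom_{\widetilde{R}}(E_x^{\vee} \boxtimes E_y, \Delta^{\stab})$, and then apply the trace. Writing $\iota_{\infty}(\id) = \eta_0 + \eta_1 + \dots + \eta_n$ decomposed by $\theta$-form degree, only the top piece $\eta_n = B\theta_1 \cdots \theta_n$ survives the projection to $\Omega_w[n]$, while the lower-degree components are killed. Multiplying $\eta_n$ by (the lift of) $F$ yields $BF \cdot \theta_1 \cdots \theta_n$ up to a Koszul sign $(-1)^{n|F|}$ coming from commuting the degree-$|F|$ element $F$ past the $n$ odd variables $\theta_1, \dots, \theta_n$; applying the matrix supertrace to the coefficient then gives $F \mapsto (-1)^{n|F|} \tr(BF)$, as asserted.

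The main obstacle is verifying that the trace functor $[\underline{\tr}]$ on $\MF^{\infty}(\widetilde{R},\widetilde{w})$ really is given by the restriction-and-top-form-projection recipe used above, with consistent signs. This is a bookkeeping issue that rests on the Morita-theoretic identifications of \cite{dyck4}, together with a careful audit of the Koszul signs produced by the isomorphism $\Hom_{\widetilde{R}}(E_x^{\vee} \boxtimes E_y, \Delta^{\stab}) \cong \Hom_{\widetilde{R}}(E_y,E_x) \otimes_{\widetilde{R}} \widetilde{R}\left< \theta_1, \dots, \theta_n \right>$ used in Section \ref{sect.bb}. Once the dictionary is pinned down, the sign and the appearance of $\tr(BF)$ follow directly from extracting the top-form coefficient.
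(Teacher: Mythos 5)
Your approach matches the paper's: identify $\pi_E$ with $\varphi_E = \iota_\infty(\id)$ via Lemma \ref{lift}, compute the trace functor as restriction to the diagonal followed by projection onto the top-form component, and read off the formula from the tensor evaluation with $\eta_n$. The formula and sign come out correctly.

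The one point I would push back on is your characterization of the trace-functor identification as a ``bookkeeping issue.'' The paper cites Lemma 5.9 of \cite{dyck4} to write $\underline{\tr}(X) \cong (\Delta^{\stab}_{\widetilde{w}}[n])^\vee \otimes_{\widetilde{R}} X$, then identifies $(\Delta^{\stab}_{\widetilde{w}}[n])^\vee \cong \Delta^{\stab}_{-\widetilde{w}}$, and crucially needs a second application of the basic perturbation lemma (parallel to Proposition \ref{bb.homotopyinverse}) to show that the projection $\Delta^{\stab}_{-\widetilde{w}} \otimes_{\widetilde{R}} X \to R \otimes_{\widetilde{R}} X$ is a quasi-isomorphism, thereby producing $[\underline{\tr}] \simeq R \otimes_{\widetilde{R}} -$. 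That step is a genuine argument, not merely a sign audit, and your sketch should acknowledge it as such rather than folding it into the Morita identifications of \cite{dyck4}. Also, the paper attributes the $(-1)^{n|F|}$ sign specifically to the tensor evaluation isomorphism (\ref{tensor-evaluation}) rather than to ``commuting $F$ past the $\theta_i$''; these are likely equivalent after unwinding, but since the rest of Section \ref{sect.topfield} relies on this sign, it is worth tracing it through the map $\xi$ explicitly.
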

\begin{proof}
	The map 
	\[
	\varphi_E:\; \Hom_{\widetilde{R}}(E_x, E_y) \to \Delta^{\stab} 
	\]
	corresponding to $\eta$, is given by
	\begin{equation}\label{traceeta}
	F \mapsto (-1)^{n|F|}\tr(B F) \theta_1 \cdots \theta_n + l \text{.}
	\end{equation}
	Here the sign is contributed by the tensor evaluation map (cf. 
	\ref{tensor-evaluation}) and $l$ consists of terms involving $k$-forms
	with $k< n$ which are, as we will see, irrelevant.
	By Lemma 5.9 in \cite{dyck4}, we deduce that we have
	\[
	\underline{\tr}: \;\MF(\widetilde{R}, \widetilde{w}) \to C(\ku),\; X
	\mapsto \Hom_{\widetilde{R}}(\Delta^{\stab}_{\widetilde{w}}[n], X) \cong
	(\Delta^{\stab}_{\widetilde{w}}[n])^{\vee} \otimes_{\widetilde{R}} X \text{.}
	\]
	Now, directly from the definition of $\Delta^{\stab}$, it is easy to see
	that $(\Delta^{\stab}_{\widetilde{w}}[n])^{\vee}$ is isomorphic to
	$\Delta^{\stab}_{-\widetilde{w}}$. By the perturbation argument in the
	proof of Proposition
	\ref{bb.homotopyinverse} we deduce in complete analogy that the projection map
	\[
	\Delta^{\stab}_{-\widetilde{w}} \otimes_{\widetilde{R}} X \lto R \otimes_{\widetilde{R}} X\\
	\]
	has a homotopy inverse, in particular it is a quasi-isomorphism. 
	Thus, we have an equivalence $[\underline{\tr}] \simeq R \otimes_{\widetilde{R}} -$.
	The cohomology of the Hochschild chain complex $R
	\otimes_{\widetilde{R}} \Delta^{\stab}$ is concentrated in degree $n$
	(i.e. the parity of $n$ due to the $\Zt$-grading) where it is isomorphic to the Milnor
	algebra $\Omega_w$. More precisely the cohomology is concentrated in the $n$-form
	component. 
	Therefore, projection from the Hochschild complex onto $\Omega_w
	\theta_1 \dots \theta_n$ is a quasi-isomorphism. In view of
	(\ref{traceeta}), postcomposing
	$[\underline{\tr}](\varphi_E)$ with this projection results in the
	asserted formula.
\end{proof}

Note that this lemma corresponds to \cite[3.1.1]{polishchuk}.

\subsection{Canonical contracting homotopy of the Koszul complex}
\label{sect.canhom}

In this section, we construct an explicit canonical homotopy which contracts the 
Koszul complex $K(\Delta_1, \Delta_2, \dots, \Delta_n)$ onto its cohomology.
This will serve the purpose of finding an explicit expression for the homotopy
inverse $\iota_{\infty}$ from Proposition \ref{bb.homotopyinverse}. Aside from
that the result may be considered interesting in its own right.

\begin{remark} \label{remark.complete} To simplify the notation, we will construct the homotopy over the polynomial
ring $\widetilde{R} = k[x_1,y_1,\dots,x_n,y_n]$. The argumentation carries over
verbatim to the corresponding power series ring which we are really interested
in. Indeed, one simply has to replace all tensor products over $k$ by completed tensor
products and extend the maps continuously.
\end{remark}

For each $1 \le i \le n$, consider the augmented Koszul complex of the (length
$1$) sequence $\Delta_i$ in $k[x_i,y_i]$
\[
\xymatrix{
k[x_i,y_i] \theta_i \ar[r]^{d_i} & k[x_i,y_i]
\ar[r]^{d_i} & k[x_i] \xi_i\text{.}
}
\]
So we have
\[
d_i(f \theta_i) = f \Delta_i
\]
and
\[
d_i(f) = f\; (\mod \Delta_i) \text{.}
\]
We have canonical contracting homotopies $h_i$ which are defined as follows.
An element $f \in k[x_i,y_i]$ can be uniquely written as
\[
f = f_0 + \Delta_i f_1 \quad \text{with $f_0 \in k[x_i]$,}
\]
and we define
\begin{equation} \label{one.homotopy}
h_i:\; k[x_i,y_i] \to k[x_i,y_i]\theta_i, \; f \mapsto f_1 \theta_i 
\end{equation}
which we may think of as division by $\Delta_i$ without remainder.
Furthermore, we let
\[
h_i:\; k[x_i] \xi_i \to k[x_i,y_i],\; f \xi_i \mapsto f
\]
be the inclusion. The variables $\xi_i$ and $\theta_i$ are graded commutative bookkeeping variables of degree $1$ and
$-1$ respectively. 
We define the \emph{extended Koszul complex} $EK$ of the sequence $\{\Delta_1,
\dots, \Delta_n\}$ to be the tensor product over $k$ of the augmented Koszul
complexes. Using the Koszul signs rule one can easily check that the map
\[
h = \frac{1}{n}(h_1 + h_2 + \dots + h_n)
\]
defines a contracting homotopy on the extended Koszul complex. Note that as
graded vector spaces, we have
\[
EK = E \oplus K
\]
where $K$ is the graded space underlying the usual Koszul complex of $\{\Delta_1,
\dots, \Delta_n\}$. In terms of
the bookkeeping variables, $K$ consists of those elements which do not have any
$\xi_i$ terms. We call $K$ the \emph{interior}, $E$ the
\emph{exterior} of $EK$. The picture we have in mind is a half-open hypercube
whose faces constitute $E$. The $i$-th face in $E$ is given by those elements which
are multiples of a $\xi_i$. Note that each face is a Koszul complex of one
variables less.

We would like to use the contracting homotopy on $EK$ to define one on $K$. The
issue is, however, that even though $K$ is stable under $h$, it is not stable
under the differential $d$ on $EK$. Nevertheless, we can construct a
canonical perturbation of $h$ which provides an explicit contracting homotopy of
$K$. To this end, we introduce one last bit of notation. We define the maps
\[
\pr_i: K(\Delta_1, \Delta_2, \dots, \Delta_n) \to k[x_i] \otimes_k
K(\Delta_1,\dots, \widehat{\Delta_i}, \dots, \Delta_n)
\]
where $\pr_i \omega$ is obtained from $\omega$ via substituting $y_i$ by $x_i$
and removing all terms which are multiples of $\theta_i$. Note that we can naturally think of
the right-hand side Koszul complex as a subcomplex of $K(\Delta_1, \Delta_2,
\dots, \Delta_n)$. This allows us to abuse notation and consider the element
$\pr_i \omega$ as an element of $K(\Delta_1, \Delta_2, \dots, \Delta_n)$. One
easily verifies that $\pr_i$ is a map of complexes.

\begin{lem}\label{canonical.homotopy}
	There exists a unique family $\{H^{(n)} |\; n \ge 1 \}$ of homotopies 
	$H^{(n)}$ of the Koszul complexes $K(\Delta_1, \Delta_2, \dots, \Delta_n)$ satisfying
	\begin{enumerate}
		\item The homotopy $H^{(1)}$ agrees with the one defined in
			(\ref{one.homotopy}).
		\item For $n > 1$ we have the recursive formula
			\begin{equation}\label{recursion}
				H^{(n)} = h +
				\frac{1}{n} \sum_i H^{(n-1)} \circ \pr_i
				\text{.}
			\end{equation}
	\end{enumerate}
	Each homotopy $H^{(n)}$ contracts the corresponding Koszul complex onto its
	cohomology.
\end{lem}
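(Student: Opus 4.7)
Existence and uniqueness of the family $\{H^{(n)}\}$ are immediate from the recursion (\ref{recursion}), which determines $H^{(n)}$ unambiguously from $H^{(n-1)}$, with the base case $H^{(1)}$ fixed by condition (1). The substantive claim is the contraction property, which I would prove by induction on $n$. The base case $n=1$ amounts to checking $d H^{(1)} + H^{(1)} d = \id - \iota p$ via the decomposition $f = f_0 + \Delta_1 f_1$ on $k[x_1, y_1]$, where $p$ evaluates $y_1 \mapsto x_1$.

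For the inductive step, I would substitute the recursion into $dH^{(n)} + H^{(n)} d$. The ``tail'' terms are handled immediately: since $\pr_i$ is a chain map and, by the inductive hypothesis, $H^{(n-1)}$ is a contracting homotopy, they yield $\frac{1}{n}\sum_i (\id - \iota^{(n-1)} p^{(n-1)})\pr_i$. The remaining piece $(dh + hd)|_K$ requires more care because $h$ is a contracting homotopy of the acyclic extended complex $EK$ (so $d_{EK} h + h d_{EK} = \id$), but the actual differential on $K$ is only the interior part: writing $d_{EK}|_K = d_K + \Pi$ where $\Pi$ lands in the $\xi$-faces, and using that $h$ preserves $K$, one projects the $EK$-identity onto $K$ to obtain $(d_K h + h d_K)|_K = \id_K - p_K h \Pi$. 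The key calculation is then to identify $p_K h \Pi$ with $\tfrac{1}{n}\sum_i \pr_i$: for $\omega \in K$, $\Pi(\omega)$ is a signed sum, over indices $i$ with no $\theta_i$ in $\omega$, of copies of $\omega|_{y_i \to x_i}\xi_i$, and among the components of $h = \frac{1}{n}\sum_j h_j$ only $h_i$ can remove $\xi_i$ and land back in $K$, contributing exactly $\frac{1}{n}\pr_i(\omega)$ per summand.

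Combining the two pieces gives
\[
d H^{(n)} + H^{(n)} d = \id - \tfrac{1}{n}\sum_i \pr_i + \tfrac{1}{n}\sum_i \pr_i - \tfrac{1}{n}\sum_i \iota^{(n-1)} p^{(n-1)}\pr_i,
\]
and the induction closes upon observing that $p^{(n-1)} \pr_i = p$ for every $i$, since both operations substitute every $y_j$ by $x_j$ and retain only the $\theta$-free part; the final sum then collapses to $\iota p$, yielding $dH^{(n)} + H^{(n)} d = \id - \iota p$. The main obstacle I anticipate is the Koszul sign bookkeeping required to pin down the identification $p_K h \Pi = \tfrac{1}{n}\sum_i \pr_i$ on the nose, given the opposite degrees of $\theta_i$ and $\xi_i$ and the ordering conventions in the tensor product of augmented Koszul complexes; once the signs are correct, the recursive structure of the $\xi$-faces aligns exactly with the operators $\pr_i$ and the induction proceeds cleanly.
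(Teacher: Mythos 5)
Your proposal is correct and follows essentially the same route as the paper's proof: you project the contraction identity $[d_{EK},h]=\id$ onto $K$, identify the resulting defect term (your $p_Kh\Pi$, the paper's $h_{\inte}d_{\ext}$) with $\frac{1}{n}\sum_i\pr_i$, and then close the induction using that $\pr_i$ is a chain map and that $p^{(n-1)}\pr_i = p^{(n)}$. The only cosmetic difference is that the paper verifies the contraction property separately in negative degree (where $\iota p = 0$) and in degree zero (where the augmentation $p^{(n)}$ enters), whereas you handle all degrees uniformly through $\id - \iota p$; both work, and the key computations are the same.
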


\begin{proof}
	We argue by induction on $n$.
We decompose the differential $d$ on $EK$ into $d = d_{\ext} + d_K$, where 
$d_{\ext} = \pr_E \circ d$ and $d_K = \pr_K \circ d$. Similarly, the homotopy
$h$ does
not preserve $E$ and we have $h = h_E + h_{\inte}$ with $h_{E} = \pr_E \circ
h$ and $h_{\inte} = \pr_K \circ h$. First, let $\omega$ be an element
of negative degree in $K$. Then we have
\begin{equation}\label{homotopy.formula}
	\begin{split}
\omega & = [d,h] \omega \\
& = d_{\ext} h \omega + d_K h \omega + h d_K \omega + h d_{\ext} \omega \\
& = [d_K, h] \omega + h_{\inte} d_{\ext} \omega 
\end{split}
\end{equation}
where the last equality follows since $\omega$ lies in $K$ and so all exterior
components must cancel out. 
Directly from the definitions we calculate
\begin{equation}\label{homotopy.middle}
h_{\inte} d_{\ext} \omega = \frac{1}{n}( \pr_1 \omega + \pr_2 \omega + \dots +
\pr_n \omega) \text{.}
\end{equation}
Assume, $H^{(n-1)}$ is a contracting homotopy for the Koszul complex in $n-1$
variables.
Then, we calculate
\begin{align*}
	[d_K,H^{(n)}] \omega & = [d_K, h] \omega + [d_K, \frac{1}{n}\sum_i
	H^{(n-1)} \circ \pr_i] \omega \\
	& = [d_K, h] \omega + \frac{1}{n} (\pr_1 \omega + \pr_2 \omega + \dots +
\pr_n \omega) && \text{(since $d_K$ commutes with $\pr_i$)}\\
	& = \omega &&  \text{(use (\ref{homotopy.middle}) and
	(\ref{homotopy.formula}))}
\end{align*}
Now let $f$ be an element of degree $0$ in $K$. Define the augmentation maps
\[
p^{(n)}: \; \widetilde{R} \to \widetilde{R}/(\Delta_1, \dots, \Delta_n) \cong
k[x_1,\dots,x_n]
\subset \widetilde{R} \text{.}
\]
We have to show that 
\[
d_K H^{(n)} f + p^{(n)} f = f \text{.}
\]
Again, we argue inductively and calculate
\begin{align*}
d_K H^{(n)} f + p^{(n)} f & = d_K h f + (\frac{1}{n}\sum_j d_K H^{(n-1)} \pr_j f) + p^{(n)}
f\\
& = d_K h f + \frac{1}{n}\sum_j (d_K H^{(n-1)} \pr_j f + p^{(n-1)} \pr_j f)\\
& = \frac{1}{n} (\sum_j (d_j h_j f +\pr_j f))\\
& = \frac{1}{n} (\sum_j f ) = f \text{.}
\end{align*}
This proves all assertions.
\end{proof}

We give an explicit formula for $H^{(n)}$.

\begin{cor}\label{exp.formula}
	Explicitly, we have
	\[
	H^{(n)} = (h_1 + h_2 + \dots + h_n) \circ P^{(n)}
	\]
	where 
	\begin{equation} \label{explicit.formula}
		\begin{split}
		P^{(n)} & = \sum_{l = 0}^{n-1} a(l) \sum_{j_1 < j_2 < \dots <
		j_{l}} \pr_{j_1} \circ \pr_{j_2} \circ \dots \circ \pr_{j_l}\\
		& = \frac{1}{n} \id + \frac{1}{n(n-1)}\sum_j \pr_j + \dots
		\end{split}
	\end{equation}
	and 
	\[
	a(l) = \frac{1}{n-l}{n \choose l}^{-1}
	\text{.}
	\]
\end{cor}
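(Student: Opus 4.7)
The plan is to induct on $n$, feeding the recursion of Lemma~\ref{canonical.homotopy} into the inductive hypothesis and comparing coefficients of the basic operators $h_k \circ \pr_T$. The base case $n=1$ is immediate since $P^{(1)} = a(0)\,\id = \id$ by definition, giving $H^{(1)} = h_1$, which matches the homotopy in (\ref{one.homotopy}).

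For the inductive step, I would first record two elementary closure properties of the projections and contractions. The projections $\pr_j$ commute and are idempotent, so for any subset $S = \{j_1 < \cdots < j_l\} \subseteq \{1,\dots,n\}$ the composition $\pr_S := \pr_{j_1} \circ \cdots \circ \pr_{j_l}$ depends only on $S$, and $\pr_S \circ \pr_i = \pr_{S \cup \{i\}}$. Moreover, since $\pr_k$ kills both the variable $y_k$ and the form $\theta_k$ from its argument, and since $h_k$ requires a $y_k$-dependence to produce a nonzero answer, one has $h_k \circ \pr_T = 0$ whenever $k \in T$. Consequently, when we expand $(\sum_k h_k) \circ P^{(n)}$, only the pairs $(k, T)$ with $k \notin T$ contribute.

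Next I would apply the recursion (\ref{recursion}) together with the inductive hypothesis applied to each of the $n$ Koszul subcomplexes obtained by omitting one variable, obtaining
\[
H^{(n)} \;=\; \frac{1}{n}\sum_{k} h_k \;+\; \frac{1}{n}\sum_{i}\Bigl(\sum_{k\ne i} h_k\Bigr)\circ P^{(n-1)}_i \circ \pr_i,
\]
where $P^{(n-1)}_i$ is the analogue of $P^{(n-1)}$ for the $n-1$ indices different from $i$. Using $\pr_{S'}\circ \pr_i = \pr_{S'\cup\{i\}}$, the operator $H^{(n)}$ becomes a sum over pairs $(k, T)$ with $k \notin T$, and the term $h_k \circ \pr_T$ for $T \ne \emptyset$ receives the coefficient
\[
\frac{1}{n}\sum_{i \in T} a(|T|-1)_{n-1} \;=\; \frac{|T|}{n(n-|T|)}\binom{n-1}{|T|-1}^{-1},
\]
while the $T = \emptyset$ coefficient is $\frac{1}{n} = a(0)_n$.

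The final step is the purely combinatorial check that this matches $a(|T|)_n = \frac{1}{n-|T|}\binom{n}{|T|}^{-1}$, which reduces to the standard identity $\binom{n}{l} = \frac{n}{l}\binom{n-1}{l-1}$ with $l = |T|$. I expect no genuine obstacle here: once the nilpotency $h_k \circ \pr_T = 0$ for $k \in T$ is in hand, which rules out the would-be $|S| = n$ term in (\ref{explicit.formula}), the rest is bookkeeping. The one place to be careful is to verify that $\pr_i$ commutes with the differential $d_K$ used implicitly in the recursion (already noted in Lemma~\ref{canonical.homotopy}), so that swapping the order of compositions in the inductive expansion is legitimate.
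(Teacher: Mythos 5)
Your proof is correct and carries out exactly the inductive expansion of the recursion (\ref{recursion}) that the paper's one-line proof alludes to, including the key facts needed to make it go through: the vanishing $h_k \circ \pr_T = 0$ for $k \in T$, the absorption $\pr_{T'} \circ \pr_i = \pr_{T' \cup \{i\}}$, and the binomial identity $\binom{n}{l} = \frac{n}{l}\binom{n-1}{l-1}$ that confirms the recurrence for $a(l)$. One small overcautious remark: the commutation of $\pr_i$ with $d_K$ is relevant to the proof of Lemma~\ref{canonical.homotopy} itself, but once that lemma is granted, solving the recursion for $H^{(n)}$ is purely formal bookkeeping on operators and does not require revisiting $d_K$.
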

\begin{proof} This can be easily deduced from the recursive properties of
	$H^{(n)}$.
\end{proof}

We will never actually make use of this explicit formula in our calculation. In fact, we
will only need various simple properties of $H^{(n)}$ and its
components which are collected in the following proposition. 
For a $k$-form $\omega$ in 
$K(\Delta_1, \Delta_2, \dots, \Delta_n)$, we let $\omega(y_i)$ be the $k$-form obtained from $\omega$ via 
substituting $x_i$ by $y_i$. Analogously, we define $\omega(x_i)$. Note the
difference between $\pr_i \omega$ and $\omega(x_i)$.

\begin{prop}\label{properties}
	\begin{enumerate}[(1)]
		\item \label{p2} The homotopies $h_i$ and $h_j$ anticommute for
			all $i,j$.
		\item \label{p1} For $\omega$ in $K(\Delta_1, \Delta_2, \dots, \Delta_n)$ we have
			\[
			h_i(\omega(y_i)) \equiv \theta_i
			\frac{\partial}{\partial x_i} \omega(x_i) \quad
			\text{$(\mod \Delta_i)$.}
			\]
		\item \label{p3} We have
			\[
			h_i \circ P^{(n)} = P^{(n)} \circ h_i
			\]
			for the iterated projection map from Corollary
			\ref{exp.formula}.
		\item \label{p4} Let $k >0$ and let $\omega$ be a $k$-form in $K(\Delta_1, \Delta_2, \dots, \Delta_n)$. 
			Then we have
			\[
			P^{(n)}(\omega) \equiv \frac{1}{k} \omega \quad \text{$(\mod \Delta_1,\dots,\Delta_n)$.}
			\]
	\end{enumerate}
\end{prop}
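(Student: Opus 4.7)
The plan is to treat the four parts of Proposition \ref{properties} essentially independently, working directly from the definitions of $h_i$, $\pr_j$ and the explicit formula for $P^{(n)}$ from Corollary \ref{exp.formula}, using the standard choice $\Delta_i = y_i - x_i$ implicit in the construction.

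For (1), I would invoke the Koszul sign convention: each $h_i$ is an odd operator supported on the $i$-th tensor factor, so for $i \ne j$ the sign rule $(f \otimes g)(a \otimes b) = (-1)^{|g||a|}f(a) \otimes g(b)$ gives $h_i h_j + h_j h_i = 0$; and $h_i^2 = 0$ because $\operatorname{im}(h_i) \subset k[x_i, y_i]\theta_i$, on which $h_i$ is defined to act as zero. For (2), I would Taylor expand coefficient-wise: if the coefficient of a fixed $\theta_I$ in $\omega$ is $g(x_i, y_i)$, then the coefficient in $\omega(y_i)$ is $g(y_i, y_i)$, and the decomposition $g(y_i, y_i) = g(x_i, x_i) + (y_i - x_i)f_1$ together with the chain rule yields $f_1 \equiv \partial_{x_i} g(x_i, x_i) \pmod{\Delta_i}$; this matches the coefficient of $\theta_i \partial_{x_i}\omega(x_i)$.

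For (3), I would argue by cases. When $i \ne j$, $h_i$ and $\pr_j$ act on disjoint tensor factors with $\pr_j$ even, so they commute. When $i = j$, both composites $h_i \pr_i$ and $\pr_i h_i$ vanish: after $\pr_i$ the $y_i$-dependence is killed so the division by $\Delta_i$ has zero remainder, and after $h_i$ the result carries a $\theta_i$ which $\pr_i$ then annihilates. Consequently $h_i$ commutes with every monomial $\pr_{j_1}\cdots\pr_{j_l}$ in the expansion (\ref{explicit.formula}) of $P^{(n)}$ — either $i$ is absent from $\{j_1, \ldots, j_l\}$ and passes through each factor, or $i$ is present and both sides are zero — and summing with the weights $a(l)$ gives $h_i P^{(n)} = P^{(n)} h_i$.

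For (4), I would evaluate $P^{(n)}$ on a basis element $f\theta_I$ with $|I| = k$ modulo $(\Delta_1, \ldots, \Delta_n)$. Since $y_j \equiv x_j$ mod $\Delta_j$, the operator $\pr_j$ acts on $f\theta_I$ as the identity when $j \notin I$ and as zero when $j \in I$, so $\sum_{j_1 < \cdots < j_l} \pr_{j_1}\cdots\pr_{j_l}(f\theta_I) \equiv \binom{n-k}{l} f\theta_I$. The claim then reduces to the combinatorial identity
\[
\sum_{l=0}^{n-1} a(l)\binom{n-k}{l} = \frac{1}{k},
\]
which I would prove by rewriting $a(l)\binom{n-k}{l} = (n-l-1)!(n-k)!/(n!(n-k-l)!)$, reindexing via $m = n - k - l$, and applying the hockey-stick identity $\sum_{m=0}^{r}\binom{k-1+m}{m} = \binom{k+r}{r}$ with $r = n-k$. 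I expect this combinatorial identity to be the main technical point; the other three parts are essentially bookkeeping once the sign and tensor-factor conventions are fixed.
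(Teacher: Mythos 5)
Your arguments for parts (1), (2), and (3) are correct and are essentially what the paper (tersely) does: (1) is dismissed as "immediate from the construction," and you are just spelling out the Koszul-sign anticommutation for $i \ne j$ together with $h_i^2 = 0$ since $h_i$ kills the $\theta_i$-component; (2) is the same Taylor expansion in $\Delta_i$; (3) uses the same two facts ($\pr_j$ commutes with $h_i$ for $i \ne j$, and $h_i\pr_i = \pr_i h_i = 0$). For part (4) you take a genuinely different route. The paper does not touch the explicit formula of Corollary \ref{exp.formula}; instead it argues by induction on $n-k$, using the base case $k=n$ and the recursion $P^{(n)} = \frac{1}{n}\bigl(\id + \sum_j P^{(n-1)}\circ\pr_j\bigr)$ to reduce to $P^{(n-1)}$. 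You instead evaluate the closed form directly: after observing that modulo $(\Delta_1,\dots,\Delta_n)$ each $\pr_j$ acts as the identity (if $j\notin I$) or as zero (if $j\in I$) on $f\theta_I$, the claim reduces to the identity $\sum_{l} a(l)\binom{n-k}{l} = \frac{1}{k}$, which you verify via the hockey-stick identity. Both arguments are correct; the paper's induction is shorter and sidesteps the binomial sum entirely, whereas your computation has the mild advantage of independently confirming the coefficients $a(l)$ in Corollary \ref{exp.formula} (whose own proof in the paper is left to the reader) against the characterizing property $P^{(n)}\omega \equiv \frac{1}{k}\omega$.
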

\begin{proof} 
	\begin{enumerate}[(1)]
\item 
	This is immediate from the construction.
\item 
	Expanding the coefficients of $\omega(y_i)$ into its Taylor series with
	respect to the variable $x_i$, we obtain
	\[
	\omega(y_i) = \omega(x_i) + \Delta_i \frac{\partial}{\partial x_i} \omega(x_i) 
	+ \frac{\Delta_i^2}{2} \left(\frac{\partial}{\partial x_i}\right)^2 \omega(x_i) + \dots \text{,}
	\]
	which implies the assertion.
\item 
	The projection maps $\pr_i$ commute with $h_j$ for all $i,j$. Note that $\pr_i h_i = h_i \pr_i = 0$.
\item 
	For $k = n$, we have 
	\[
	P^{(n)} \omega = \frac{1}{n} \omega
	\]
	since all maps $\pr_j$ annihilate $\omega$. We proceed by induction on
	$n-k$. The iterative projection map satisfies the recursion
	formula
	\[
	P^{(n)} = \frac{1}{n}( \id + \sum_j P^{(n-1)} \circ \pr_j)
	\]
	which allows us to proceed by induction on $n-k$. It suffices to prove
	the statement for $\omega = f \theta_{i_1} \theta_{i_2} \cdots
	\theta_{i_k}$. In this case, we have 
	\begin{align*}
	P^{(n)}\omega & = \frac{1}{n}( \omega + \sum_j P^{(n-1)} \circ \pr_j)\\
	& = \frac{1}{n}( \omega + \sum_{j \ne i_l} P^{(n-1)} f(x_j) \theta_{i_1} \theta_{i_2} \cdots
	\theta_{i_k} )\\
	& = \frac{1}{n}( \omega + (n-k) \frac{1}{k} \omega) \quad \text{$(\mod
	\Delta_1,\dots,\Delta_n)$}\\
	& = \frac{1}{k} \omega \quad \text{$(\mod \Delta_1,\dots,\Delta_n)$}
	\end{align*}
	\end{enumerate}
	which proves our claim.
\end{proof}

\subsection{Explicit Formula}

In this section, we set $R = k[ [x_1, \dots, x_n] ]$
and work with completed tensor products (cf. Remark \ref{remark.complete}). We will now use the canonical Koszul homotopy to calculate $\eta =
\iota_{\infty}(\id)$ with $\iota_{\infty}$ from Proposition
\ref{bb.homotopyinverse}. 
In fact, in view of Lemma \ref{bulk.lemma}, we only
need an explicit formula for $\eta_n$ modulo $(\Delta_1, \Delta_2, \dots,
\Delta_n)$.
We fix $n$ and denote the canonical contracting homotopy $H^{(n)}$ and the
iterative projection map $P^{(n)}$ of the
previous section by $H$ and $P$ respectively.

At this point, we choose the $1$-form $\lambda = w_1 \theta_1 + w_2 \theta_2 + \dots + w_n \theta_n$ 
from (\ref{1form}) to be given by $H(\widetilde{w})$. As already mentioned above, this leads to a canonical
choice of a model for $\Delta^{\stab}$. 

\begin{theo}
	With the above choices we have 
	\[
	\eta_n = (-1)^{n+1 \choose 2}\frac{1}{n!}(dQ)^n \quad \text{$(\mod \Delta_1, \dots, \Delta_n)$}\text{.}
	\]
\end{theo}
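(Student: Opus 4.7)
My plan is to expand $\eta = \iota_{\infty}(\id_E) = \sum_{k=0}^{n}(-H\delta)^{k}(\id_E)$ with $\delta = d_Q + \epsilon_{\lambda}$ and extract $\eta_n$ modulo $(\Delta_1,\dots,\Delta_n)$. Since $d_Q$ preserves form degree, $\epsilon_{\lambda}$ raises it by one, and $H$ raises it by one, a distributed monomial in $(-H\delta)^{k}$ with $a$ factors of $-Hd_Q$ and $b$ factors of $-H\epsilon_{\lambda}$ lands in form degree $a+2b$. Hence $\eta_n$ is assembled from monomials with $a+2b=n$, and the maximal case $k=n$, $b=0$ corresponds to the pure string $(-Hd_Q)^{n}\iota(\id_E)$.

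The central structural input is $H^{2}=0$, which is immediate from Proposition \ref{properties}(1) and (3): the $h_i$ anti-commute and each commutes with $P$, so $H^{2}=(h_1+\cdots+h_n)^{2}P^{2}=0$. Because of the canonical choice $\lambda = H(\widetilde{w})$ this gives $H(\lambda) = H^{2}(\widetilde{w}) = 0$, which directly kills the outermost $\epsilon_{\lambda}$-insertion applied to $\id_E$: by $\Hom_R(E,E)$-linearity of $H$ one has $-H\epsilon_{\lambda}(\id_E) = -H(\lambda\,\id_E) = 0$. For interior $\epsilon_{\lambda}$-insertions I would telescope using the identity $\iota_{\Delta}\epsilon_{\lambda} + \epsilon_{\lambda}\iota_{\Delta} = \widetilde{w}$ (which follows from $\iota_{\Delta}(\lambda) = \widetilde{w}$) together with the retract relation $[\iota_{\Delta},H] = \id - \iota p$, rewriting each mixed monomial as an $\iota_{\Delta}$-exact term plus one containing an explicit factor of $\widetilde{w} \in (\Delta_1,\dots,\Delta_n)$; both die in the reduction modulo the ideal.

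Having reduced to the pure string $(-Hd_Q)^{n}\iota(\id_E)$, I evaluate it by induction on the number $m$ of applications, with invariant $(-Hd_Q)^{m}\iota(\id_E) \equiv c_m\,(dQ)^{m}$ modulo $(\Delta)$. The inductive step exploits that $d_Q$ modulo $\Delta$ acts as the graded bracket with $Q = Q(x)$, and the Leibniz identity $\{Q,\partial_iQ\} = \partial_iw$ splits the output into a scalar piece of the form $dw\,\cdot(dQ)^{m-1}$ (which $H$ annihilates, since $H(\partial_iw\,\theta_i)=0$ because $\partial_iw$ has no $y$-dependence) plus a genuine $(dQ)^{m+1}$ piece. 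Proposition \ref{properties}(4) then identifies $P$ with multiplication by $1/(m+1)$ on $(m+1)$-forms modulo $(\Delta)$, while Proposition \ref{properties}(2) evaluates each $h_j$ modulo $\Delta_j$ as $\theta_j\partial_j$, so the $\partial_jQ$-contributions reassemble into $(dQ)^{m+1}$. Accumulating the factors yields $c_n = \pm 1/n!$, and the overall sign $(-1)^{\binom{n+1}{2}}$ emerges from the Koszul identity $(dQ)^{n} = (-1)^{\binom{n}{2}}(dQ)^{\wedge n}\theta_1\cdots\theta_n$ combined with the recursion sign via $n + \binom{n}{2} = \binom{n+1}{2}$. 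The main obstacle is the cancellation of the mixed $\epsilon_{\lambda}$-monomials beyond the outermost case handled directly by $H^{2}=0$; the telescoping via $\iota_{\Delta}\epsilon_{\lambda}+\epsilon_{\lambda}\iota_{\Delta} = \widetilde{w}$ is the delicate combinatorial heart of the proof.
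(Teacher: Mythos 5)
The heart of your argument---identifying $H^{2}=0$ as the structural engine of cancellation---is correct and is in fact what drives the paper's proof, where it appears in the equivalent guise of the vanishing of $\sum_{i,j}h_ih_j$ after summing over all indices. Your computation $H(\lambda)=H^{2}(\widetilde{w})=0$ killing the rightmost $-H\epsilon_{\lambda}$ is also valid. However, beyond this the two routes diverge, and your version has two genuine gaps.

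First, your handling of the interior $\epsilon_{\lambda}$-insertions is a sketch of an idea rather than an argument. The paper does \emph{not} telescope with $\iota_{\Delta}$ at all: it applies $(H\delta)$ iteratively, expands each $h_j$ via the Leibniz rule of Proposition~\ref{properties}(\ref{p1}) modulo $\Delta_j$, and observes that every term arising from $\epsilon_{\lambda}$ either carries a factor $h_j(\lambda)=\sum_k h_jh_kP(\widetilde{w})$ or an expression $\sum_{i,j}h_jh_i(\cdots)$, both of which vanish by anticommutation --- that is, again by $H^{2}=0$. Your proposed relations $\iota_{\Delta}\epsilon_{\lambda}+\epsilon_{\lambda}\iota_{\Delta}=\widetilde{w}$ and $H\iota_{\Delta}+\iota_{\Delta}H=\id-\iota p$ are correct, but you give no mechanism for inserting an $\iota_{\Delta}$ into the interior of the string $\cdots(-Hd_Q)(-H\epsilon_{\lambda})(-Hd_Q)\cdots$ so that telescoping could begin, and you acknowledge this is unresolved. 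As it stands this step is missing.

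Second, and more seriously, the inductive invariant ``$(-Hd_Q)^{m}\iota(\id)\equiv c_m(dQ)^{m}\pmod{(\Delta)}$'' cannot be propagated as stated, because $H$ does \emph{not} preserve the ideal $(\Delta_1,\dots,\Delta_n)$: the operators $h_i$ are division-by-$\Delta_i$ operators, so $H(d_Q\rho_m)$ can land outside the ideal even when $\rho_m\in(\Delta)$. Reducing modulo $(\Delta)$ in the middle of the string is therefore not legitimate without extra structure on the error. The paper avoids the problem by never discarding the nested $P^{(n)}$-operators --- the exact answer after $m$ steps is a nested expression $P(P(\cdots P(h_{i_1}Q_y)h_{i_2}Q_y\cdots)h_{i_m}Q_y)$ --- and by the fact that the error produced each time a Leibniz expansion is invoked has the specific form $\Delta_j\theta_j\cdot(\cdots)$, whose $\theta_j$-factor makes it annihilated by $h_j$ and by $\pr_j$ and hence stable under all remaining applications of $H$, $P$, $d_Q$, $\epsilon_{\lambda}$. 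Only at the very end, once the full $n$-form component is assembled, does one invoke Proposition~\ref{properties}(\ref{p4}) to turn each nested $P$ into $1/k$ and Proposition~\ref{properties}(\ref{p1}) to turn each $h_{i_j}$ into $\theta_{i_j}\partial_{i_j}$, producing the $1/n!$ and $(dQ)^{\wedge n}$. To make your induction rigorous you would need to establish and carry along the $\Delta_j\theta_j$-structure of the error; absent that, the inductive step fails.

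Finally, your reduction of the pure string via ``$d_Q$ modulo $\Delta$ acts as the bracket with $Q$'' together with $\{Q,\partial_iQ\}=\partial_iw$ is not how the paper proceeds and is more delicate than it appears: for instance $d_Q(\id)=Q_x-Q_y$ is \emph{nonzero} precisely because one must apply $H$ before reducing modulo $\Delta$; passing to the bracket $[Q,\id]=0$ prematurely would give the wrong answer. The paper instead uses $h_j(Q_x)=0$ (no $y$-dependence of $Q_x$) as the exact cancellation, with no reduction modulo $\Delta$ at that stage. Your account of the signs and the final $1/n!$ is consistent with the paper's, but the intermediate mechanism differs.
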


\begin{proof} Let us introduce $H_i = h_i \circ P$. 	
	By Proposition \ref{bb.homotopyinverse}, we have to calculate 
	\[
	(\iota_{\infty}(\id))_n = (-1)^n (H\delta)^n(\id) \text{.}
	\]
	In the following calculation, we use the convention to sum over all indices which appear (Einstein's sum
	convention). 
	In addition to the usual Koszul signs rule, we will use the following key facts. 
	\begin{enumerate}[(i)]
		\item \label{n1} All terms involving the composition $h_i h_j$ vanish after summing over all
	indices (\ref{properties} (\ref{p2})).
		\item The operators
			$h_i$ satisfies the Leibniz rule modulo $\Delta_i$ (\ref{properties} (\ref{p1})).
		\item The projection operator $P$ commutes
			with $h_i$ (\ref{properties} (\ref{p3})).
		\item By choice, we have $\lambda = h_iP(\widetilde{w})$. 
	\end{enumerate}
	We start by calculating
	\begin{equation*}
		\begin{split}
		(H\delta)(\id) & = h_iP(d_Q(\id) + \lambda) = P(h_i( Q_x - Q_y) +
		h_i(h_jP(\widetilde{w}))\\
		& =  - P h_i (Q_y) \text{.}
		\end{split}
	\end{equation*}
	Proceeding, we find
	\begin{equation*}
		\begin{split}
			(H\delta)^2(\id)  
			&= -h_j P[ Q_x P(h_i(Q_y)) - P(h_i(Q_y))Q_y + \lambda P(h_i Q_y)]\\
			&= -P [ Q_x P(h_j (h_i(Q_y))) - P(h_j(h_i(Q_y)))Q_y -
			            P(h_i(Q_y))h_j(Q_y) \\
			&\quad+ h_j(\lambda)(h_i Q_y) -
			\lambda(h_j(h_i Q_y)) ] \quad \text{$(\mod \Delta_j)$}\\
			& = P( P(h_i(Q_y)) h_j(Q_y)) \quad \text{$(\mod \Delta_j)$}\text{,}
		\end{split}
	\end{equation*}
	where most of the terms vanish thanks to (\ref{n1}).
	An iteration of this argument leads to the formula
	\[
		\begin{split}
	(H\delta)^n(\id) & = (-1)^n P ( P (\dots P(h_{i_1}(Q_y)) h_{i_2}(Q_y) \dots)
	h_{i_n}(Q_y) )\quad \text{$(\mod \Delta_1, \dots, \Delta_n)$}\\
	& = (-1)^n(-1)^{n+1 \choose 2} \frac{1}{n!} \partial_{i_1}Q \partial_{i_2}Q \cdots
	\partial_{i_n}Q \theta_{i_1} \theta_{i_2} \cdots \theta_{i_n}\quad \text{$(\mod \Delta_1, \dots, \Delta_n)$,}
\end{split}
	\]
	where the last equality follows from Proposition \ref{properties}
	(\ref{p4}) and the Koszul sign interaction of $\theta_i$ with $Q$.
	Incorporating the additional sign $(-1)^n$ leads to the claimed formula.
\end{proof}

Combining this result with Lemma \ref{bulk.lemma}, we obtain an explicit formula
for the boundary-bulk map.

\begin{theo}\label{theorem.bb}
	The boundary-bulk map admits the explicit formula
	\begin{equation}\label{boundary.bulk}
		[\underline{\tr}](\varphi_E): \; \Hom(E,E) \to \Omega_w[n],\; F
		\mapsto (-1)^{n+1 \choose 2}\frac{1}{n!} \tr( F (dQ)^{\wedge n} )
	\end{equation}
	where $Q$ is the twisted differential corresponding to $E$. 
\end{theo}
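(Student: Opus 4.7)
The plan is to obtain this formula as a direct combination of the two preceding results in Section \ref{sect.bb}. Lemma \ref{bulk.lemma} expresses the boundary-bulk map as $F \mapsto (-1)^{n|F|}\tr(BF)$, where $B$ is defined by writing the top-form component of $\eta = \iota_\infty(\id)$ as $\eta_n = B\theta_1\cdots\theta_n$. The theorem immediately preceding this one identifies $\eta_n$ explicitly as $(-1)^{\binom{n+1}{2}}\tfrac{1}{n!}(dQ)^{\wedge n}\theta_1\cdots\theta_n$ modulo $(\Delta_1,\dots,\Delta_n)$, so that $B \equiv (-1)^{\binom{n+1}{2}}\tfrac{1}{n!}(dQ)^{\wedge n}$ in the appropriate quotient.

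First I would substitute this expression for $B$ into the formula of Lemma \ref{bulk.lemma}, yielding the preliminary expression
\[
F \;\mapsto\; (-1)^{n|F|+\binom{n+1}{2}}\frac{1}{n!}\tr\bigl((dQ)^{\wedge n} F\bigr).
\]
To reach the form asserted in the theorem I would then invoke the graded cyclic invariance of the trace. Since $(dQ)^{\wedge n}$ is an operator of $\Zt$-degree $n$ and $F$ has degree $|F|$, one has $\tr((dQ)^{\wedge n}F) = (-1)^{n|F|}\tr(F(dQ)^{\wedge n})$, and the two $(-1)^{n|F|}$ factors cancel to give exactly the claimed formula.

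The one subtlety to address is that the identification of $B$ with $(-1)^{\binom{n+1}{2}}\tfrac{1}{n!}(dQ)^{\wedge n}$ holds only modulo $(\Delta_1,\dots,\Delta_n) \subset \widetilde{R}$, whereas the boundary-bulk map as formulated here takes values in $\Omega_w = R/(\partial_1 w,\dots,\partial_n w)$. This is harmless: the diagonal identification $\widetilde{R}/(\Delta_1,\dots,\Delta_n) \cong R$ used in the construction of $\Delta^{\stab}$ sends each $\Delta_i$ to $\partial_i w$ (up to terms already in the kernel of $R \twoheadrightarrow \Omega_w$), so the residual ambiguity in $B$ maps to zero in the codomain.

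The main obstacle is therefore not conceptual but notational — keeping the Koszul signs straight through the substitution and the cyclic rearrangement. No new technical input beyond Lemma \ref{bulk.lemma} and the explicit calculation of $\eta_n$ is required; the theorem is essentially their concatenation, packaged intrinsically in terms of the matrix factorization $E$ and its twisted differential $Q$.
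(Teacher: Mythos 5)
Your proposal is correct and follows exactly the same route as the paper's one-line proof: substitute the value of $B$ from the preceding theorem into Lemma \ref{bulk.lemma} and cancel the $(-1)^{n|F|}$ sign using the graded cyclic symmetry $\tr((dQ)^{\wedge n}F) = (-1)^{n|F|}\tr(F(dQ)^{\wedge n})$. Your remark on the $(\Delta_1,\ldots,\Delta_n)$ ambiguity being killed in $\Omega_w$ is a worthwhile clarification that the paper leaves implicit.
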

\begin{proof}
	Note that the sign contribution $n|F|$ cancels by using the cyclic
	symmetry of the graded trace map.
\end{proof}

Note that on cohomology, our formula and the one given in \cite{polishchuk}
produce (up to sign) the same map. The difference is that our map is
well adapted to the Kapustin-Li pairing on the chain level.

\section{Calabi-Yau structure and Topological Quantum Field Theories}
\label{sect.topfield}

\subsection{Topological field theories}

In this section, we explain the relevance of the category $\MF(R,w)$ as a
category of boundary conditions in the context of topological quantum field
theories of various flavors.

It follows from the results in \cite{dyck4} that there exists a
$2$-dimensional framed extended topological field theory in the sense of \cite{lurie} which
maps the trivially-framed point to the category $\MF(R,w)$. Here, we consider $\MF(R,w)$ as an object of an
appropriately defined $(\infty, 2)$-category $\mathcal C$ of $2$-periodic dg categories. 
Indeed, the smoothness and properness of $\MF(R,w)$
established in \cite{dyck4} imply that this category is fully dualizable in
$\mathcal C$. The assertion then follows from \cite[2.4.6]{lurie}.

As first established by Auslander, the triangulated category $[\MF(R,w)]$ admits
a Calabi-Yau structure. This suggests that the category $\MF(R,w)$ will be a
Calabi-Yau object in $\mathcal C$ in the sense of \cite[4.2.6]{lurie}. The
results of this work, will allow us to establish the existence of a Calabi-Yau structure
on the dg category $\MF(R,w)$ explicitly.
In view of \cite[4.2.11]{lurie}, this implies the existence of a $2$-dimensional
oriented extended topological field theory. 

Alternatively, using a theorem of Kontsevich and Soibelman \cite[10.2.2]{kontsevich-2006} (also
cf. \cite{cho-lee} for more details), the results of Section \ref{calabiyau} imply the existence of a minimal $A_{\infty}$ model on which the Calabi-Yau pairing has strictly cyclic
symmetry. This implies the existence of an open-closed field theory in the sense of
\cite[Theorem A]{costello}, where this notion of a strict Calabi-Yau $A_{\infty}$ algebra
is used (see \cite[7.2]{costello}). Using Costello's framework we will explain
how to deduce a Riemann-Roch formula from the existence of the field theory. The
formula presumably agrees with the one recently
established in \cite{polishchuk} (building on the work of \cite{shklyarov}).

\subsection{Calabi-Yau dg algebras}\label{calabiyau}

To put us into context, recall that a Frobenius algebra is a unital $k$-algebra $A$
together with a non-degenerate pairing
\[
A \otimes_k A \lra k,\; a \otimes b \mapsto \left< a,b \right>
\]
which satisfies
\begin{align}\label{cyclic}
\left<ab,c\right> = \left<bc,a\right> = \left<ca,b\right>
\end{align}
for all elements $a,b,c$ in $A$. Equivalently, we could formulate the definition in terms of the trace map
\[
\tr: \; A \to k, \; a \mapsto \left< a, 1 \right>\text{,}
\]
the corresponding pairing is then recovered as $\left< a,b\right> = \tr(ab)$.
The cyclic symmetry can be reformulated by saying that there exists a commutative diagram
\begin{align}\label{frobp}
\xymatrix{
A \otimes_k A \ar[r]^(.55){\left<-,-\right>} \ar[d] & k\\
A \otimes_{A \otimes_k A^{\op}} A \ar@{.>}[ur]
}
\end{align}
or, in terms of the trace map, 
\begin{align}\label{frob}
\xymatrix{
A \ar[r]^(.55){\tr} \ar[d] & k\\
A \otimes_{A \otimes_k A^{\op}} A\text{.} \ar@{.>}[ur]
}
\end{align}
Trying to generalize this notion to the context of differential graded algebras, we
would certainly start by requiring the existence of a pairing
\[
A \otimes_k A \lra k,\;  a \otimes b \mapsto \left<a,b\right>
\]
which is homologically non-degenerate. 
Whatever the notion of cyclicity should be, we would like it to be invariant under weak equivalences.
To achieve this desideratum, we require the existence of a commutative diagram
\begin{align}\label{dgfrob}
\xymatrix{
A  \ar[r]^(0.55){\tr} \ar[d]^{\beta} & k\\
A \otimes_{A \otimes_k A^{\op}}^{L} A \ar@{.>}[ur]_(0.6){\tr^\infty}
}
\end{align}
where $\beta$ is the boundary-bulk map from (\ref{boundary-bulk}) in
the case where the dg category $T$ has a single object with endomorphism dga $A$.
Observe, that the existence of the lift of the map $\tr$ in diagram (\ref{frob}) is a
property of the pairing. In contrast, specifying the map $\tr^\infty$ in diagram
(\ref{dgfrob}) requires the specification of additional structure, corresponding
to a coherent system of homotopies between the expressions appearing in
(\ref{cyclic}).
Indeed, these higher homotopies become explicit by using the cyclic bar
construction $\C(A)$ as a model for the (Hochschild) complex 
$A \otimes_{A \otimes_k A^{\op}}^{L}A$. Within this model, the map $\beta$ is
simply the inclusion of $A$ as a subcomplex of $\C(A)$. Constructing a map $\tr^{\infty}$ such that
(\ref{dgfrob}) commutes thus amounts to providing an extension of the trace map
on $A$ to one on $\C(A)$. 

The existence of the commutative diagram (\ref{dgfrob}) does not suffice to obtain 
an oriented extended field theory. Indeed, assuming the existence of such a field theory, the Hochschild
complex of $A$ will be assigned to the circle. The symmetries of the circle will
therefore act on the Hochschild complex and the trace map is seen to be
equivariant with respect to this action. On the level of chain complexes, the action
of the circle on the Hochschild complex, translates into the action of Connes'
$B$-operator. The equivariance condition amounts to providing a lift of the
trace map from the Hochschild complex to the cyclic complex.
\begin{align}\label{dgcalabi}
\xymatrix{
A  \ar[r]^(0.55){\tr} \ar[d]^{\beta} & k \ar[dd]\\
A \otimes_{A \otimes_k A^{\op}}^{L} A \ar[d] \ar@{.>}[ur]_(0.6){\tr^\infty}\\
(A \otimes_{A \otimes_k A^{\op}}^{L} A)_{S^1}
\ar@{.>}[r]_(0.7){\tr^\infty_{S^1}} & k_{S^1}
}
\end{align}
Here, we may choose Connes' cyclic complex
\[
(A \otimes_{A \otimes_k A^{\op}}^{L} A)_{S^1} \simeq \CC(A) := (\C(A)[u^{-1}], b + u B)
\]
as an explicit model in which the complex $\C(A)$ appears as a subcomplex. A dg
algebra $A$ with a homologically non-degenerate pairing $\tr$ together with a lift
$\tr^{\infty}_{S^1}$ to the cyclic complex is called a Calabi-Yau dg algebra. This
structure was already studied by Kontsevich and Soibelman in \cite{kontsevich-2006}. 
Interestingly, as proved in \cite{kontsevich-2006}, one
can always strictify a Calabi-Yau structure by passing to an appropriate minimal
$A_\infty$-model of $A$ on which the pairing becomes strictly cyclic. This strict notion of a 
Calabi-Yau $A_\infty$ algebra is used in \cite{costello}. Costello proves that every strict Calabi-Yau $A_\infty$-algebra
defines an open topological conformal field theory (TCFT) which canonically
extends to a universal open-closed TCFT.

We outline how to construct the Calabi-Yau dg algebra which will provide an
open-closed TCFT associated to the dg category $T = \MF(R,w)$ of matrix
factorizations.

First, we apply Theorem 4.2 in \cite{dyck4} which allows us to restrict our study 
to the endomorphism dg algebra $A = T(E,E)$ of a single matrix factorization $E$ in $\MF(R,w)$. 
The Kapustin-Li formula provides us with a trace map 
\begin{align}\label{kltrace}
	\tr_{KL} : A \lra k,\; F \mapsto 
	(-1)^{n+1 \choose 2}\frac{1}{n!} \Res \left[ \begin{array}{c} 
		\tr( F (dQ)^{\wedge n} )\\
		\partial_1 w, \partial_2 w,\; \cdots , \partial_n w \end{array}  \right] 
\end{align}
which, by Theorem \ref{kl}, induces a homologically non-degenerate pairing on
$A$. We have to show that this trace map is part of a Calabi-Yau structure on $A$, in other
words, we have to extend the trace map to a map $\tr^{\infty}_{S^1}$ on the cyclic complex.
By Theorem 5.7 in \cite{dyck4}, the Hochschild complex $\C(A)$ is quasi-isomorphic to 
the Milnor algebra $\Omega_w$ concentrated in degree given by the parity of $n$.
By the degeneration of the Hochschild-to-cyclic spectral sequence (cf.
\cite{dyck4}), we further know that the 
cyclic complex $\CC(A)$ is quasi-isomorphic to $\Omega_w[u^{-1}]$ concentrated
in the same degree. Thus, the diagram (\ref{dgcalabi}) specializes to 
\begin{align}\label{klcalabi}
\xymatrix{
A  \ar[r]^(0.55){\tr_{KL}} \ar[d]^{\beta} & k \ar[dd]\\
\Omega_w \ar[d] \ar@{.>}[ur]_{\tr^\infty}\\
\Omega_w[u^{-1}] \ar@{.>}[r]_{\tr^\infty_{S^1}} & k[u^{-1}]\text{,}
}
\end{align}
where we omitted the shifts by $[n]$.
The map $\beta$ coincides with the boundary-bulk map
$[\underline{\tr}](\pi_E)$ studied in Section \ref{sect.bb}. By Theorem
\ref{theorem.bb}, we have the formula
\[
\beta(F) = (-1)^{n+1 \choose 2}\frac{1}{n!}\tr( F (dQ)^{\wedge n} ) \text{.}
\]
Therefore, we can complete diagram (\ref{klcalabi}) by letting
\[
\tr^\infty(\omega) := 
	\Res \left[ \begin{array}{c} 
		\omega \\
		\partial_1 w, \partial_2 w,\; \cdots , \partial_n w \end{array}  \right] 
\]
and defining $\tr^\infty_{S^1}$ by extending $k[u^{-1}]$-linearly.
This provides $A$ with a Calabi-Yau structure. In particular, the above
mentioned result due to Kontsevich-Soibelman assures the existence of minimal
strictly cyclic models of $A$. In \cite{carqueville}, the author develops and implements
an algorithm to explicitly calculate such cyclic minimal models.

\subsection{Riemann-Roch formula}
\label{sect.rr}

Finally, we sketch how to deduce a Riemann-Roch formula from the existence of a field theory. Using
\cite[10.2.2]{kontsevich-2006}, we pass to a strictly cyclic minimal $A_\infty$-model of
$\MF(R,w)$. Here, we restrict our attention to a
direct sum of finitely many objects in $\MF(R,w)$ such that the method explained
in the previous section becomes applicable. 

By a $\Zt$-graded variant of Costello's Theorem A \cite{costello}, we obtain the existence of a canonical
open-closed field theory associated to $\MF(R,w)$. 
Within this field theory, the boundary-bulk map $[\underline{\tr}](\pi_E)$ is the map of chain complexes
associated to the cobordism visualized in Figure \ref{fig.bb}.
\begin{figure}[htp]
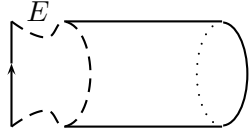


\centering

\psset{unit=0.7cm}
\pspicture(0,0)(5,3)

\put(0,1.5){
\psline(0,1)(0,0.2)
\psline{<-}(0,0.2)(0,-1)
\pscurve[linestyle=dashed](0,1)(0.7,0.7)(1,1)(1.5,0)(1,-1)(0.7,-0.7)(0,-1)

\psclip{\psframe[linestyle=none](3,-1.5)(4,1.5)}
	\psellipse[linestyle=dotted](4,0)(0.5,1)
\endpsclip
\psclip{\psframe[linestyle=none](4,-1.5)(5,1.5)}
	\psellipse[linestyle=solid](4,0)(0.5,1)
\endpsclip
\psline(1,1)(4,1)
\psline(1,-1)(4,-1)
\rput(0.5,1.2){$E$}
}
\endpspicture

\caption{The boundary-bulk map}\label{fig.bb}
\end{figure}

We define the Chern character of $E$ to be 
\[
\ch(E) = [\underline{\tr}](\pi_E)(\id_E) \in \HH_0(\MF(R,w)) \text{.}
\]
For matrix factorizations $E$, $F$ in $\MF(R,w)$ we define the $\Zt$-graded Euler
characteristic
\[
\chi \Hom(E,F) = \dim \H^0(\Hom(E,F)) - \dim \H^1(\Hom(E,F)) \text{.}
\]
The field theory corresponding to $\MF(R,w)$ assigns a scalar $\lambda \in k$ to the
cobordism drawn in Figure \ref{fig.sphere}.
\begin{figure}[htp]
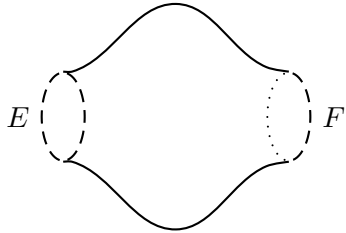


\centering

\psset{unit=0.6cm}
\pspicture(0,-3)(7,3)

\psellipse[linestyle=dashed](1,0)(0.5,1)
\psclip{\psframe[linestyle=none](6,-1.5)(8,1.5)}
	\psellipse[linestyle=dashed](6,0)(0.5,1)
\endpsclip
\psclip{\psframe[linestyle=none](4,-1.5)(6,1.5)}
	\psellipse[linestyle=dotted](6,0)(0.5,1)
\endpsclip
\pscurve(1,1)(1.2,1)(3.5,2.5)(5.5,1.1)(6,1)
\pscurve(1,-1)(1.2,-1)(3.5,-2.5)(5.5,-1.1)(6,-1)
\rput(7,0){$F$}
\rput(0,0){$E$}

\endpspicture

\caption{Twice punctured sphere}\label{fig.sphere}
\end{figure}
which is a sphere with two disks removed, where the dashed lines indicate free boundaries labelled by the objects $E$, $F$.

The field theory formalism allows us to calculate this number in two different
ways, by decomposing the above punctured sphere. Consider first the
decomposition illustrated in Figure \ref{fig.dec1}.
Interpreting all components appropriately, this yields the formula
\[
\lambda = \left< \ch(E), \ch(F) \right> \text{.}
\]


\begin{figure}[htp]
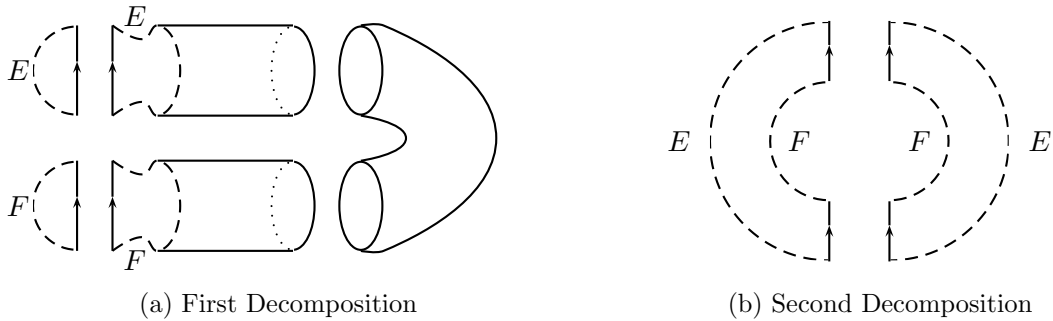


\hfill
\begin{minipage}[t]{.5\textwidth}

\psset{unit=0.6cm}
\pspicture(-1,-3)(10,3)

\put(0,1.5){
\psclip{\psframe[linestyle=none](0,-2)(1,2)}
\psellipse[linestyle=dashed](1,0)(1,1)
\endpsclip
\psline(1,1)(1,0.2)
\psline{<-}(1,0.2)(1,-1)
\rput(-0.3,0){$E$}
}

\put(0,-1.5){
\psclip{\psframe[linestyle=none](0,-2)(1,2)}
\psellipse[linestyle=dashed](1,0)(1,1)
\endpsclip
\psline(1,1)(1,0.2)
\psline{<-}(1,0.2)(1,-1)
\rput(-0.3,0){$F$}
}

\put(2,1.5){
\psline(0,1)(0,0.2)
\psline{<-}(0,0.2)(0,-1)
\pscurve[linestyle=dashed](0,1)(0.7,0.7)(1,1)(1.5,0)(1,-1)(0.7,-0.7)(0,-1)

\psclip{\psframe[linestyle=none](3,-1.5)(4,1.5)}
	\psellipse[linestyle=dotted](4,0)(0.5,1)
\endpsclip
\psclip{\psframe[linestyle=none](4,-1.5)(5,1.5)}
	\psellipse[linestyle=solid](4,0)(0.5,1)
\endpsclip
\psline(1,1)(4,1)
\psline(1,-1)(4,-1)
\rput(0.5,1.2){$E$}
}

\put(2,-1.5){
\psline(0,1)(0,0.2)
\psline{<-}(0,0.2)(0,-1)
\pscurve[linestyle=dashed](0,1)(0.7,0.7)(1,1)(1.5,0)(1,-1)(0.7,-0.7)(0,-1)

\psclip{\psframe[linestyle=none](3,-1.5)(4,1.5)}
	\psellipse[linestyle=dotted](4,0)(0.5,1)
\endpsclip
\psclip{\psframe[linestyle=none](4,-1.5)(5,1.5)}
	\psellipse[linestyle=solid](4,0)(0.5,1)
\endpsclip
\psline(1,1)(4,1)
\psline(1,-1)(4,-1)
\rput(0.5,-1.2){$F$}
}

\put(7.5,1.5){
\psellipse(0,0)(0.5,1)
\psellipse(0,-3)(0.5,1)
\pscurve(0,1)(0.5,1)(3,-1.5)(0.5,-4)(0,-4)
\pscurve(0,-1)(1,-1.5)(0,-2)
}

\endpspicture

\subcaption{First Decomposition}\label{fig.dec1}
\end{minipage}%
\begin{minipage}[t]{.5\textwidth}

\psset{unit=0.8cm}
\pspicture(-2,-2.2)(3,2)

\put(0,0){
\psclip{\psframe[linestyle=none](0,-2)(2,2)}
	\psellipse[linestyle=dashed](2,0)(2,2)
\endpsclip
\psclip{\psframe[linestyle=none](0,-2)(2,2)}
	\psellipse[linestyle=dashed](2,0)(1,1)
\endpsclip
\psline(2,2)(2,1.6)
\psline{<-}(2,1.6)(2,1)
\psline(2,-1)(2,-1.4)
\psline{<-}(2,-1.4)(2,-2)
\rput(-0.5,0){$E$}
\rput(1.5,0){$F$}
}

\put(1,0){
\psclip{\psframe[linestyle=none](2,-2)(4,2)}
	\psellipse[linestyle=dashed](2,0)(2,2)
\endpsclip
\psclip{\psframe[linestyle=none](2,-2)(4,2)}
	\psellipse[linestyle=dashed](2,0)(1,1)
\endpsclip
\psline(2,2)(2,1.6)
\psline{<-}(2,1.6)(2,1)
\psline(2,-1)(2,-1.4)
\psline{<-}(2,-1.4)(2,-2)
\rput(2.5,0){$F$}
\rput(4.5,0){$E$}

}

\endpspicture

\subcaption{Second Decomposition}\label{fig.dec2}
\end{minipage}%
\caption{Two Decompositions}
\end{figure}

Secondly, we first flatten the punctured sphere into the plane and then
decompose as illustrated in Figure \ref{fig.dec2}.
This yields the formula
\[
\lambda = \tr (\id_{\H^*(\Hom(E,F))}) = \chi \Hom(E,F) \text{,}
\]
where $\tr$ denotes the graded trace map.
Thus, we obtain the Hirzebruch-Riemann-Roch formula 
\[
\chi \Hom(E,F) = \left< \ch(E), \ch(F) \right> \text{.}
\]
To compare to the formula obtained in \cite[4.1.4]{polishchuk}, one had to
calculate the pairing $\left<-,-\right>$ on the Hochschild homology produced by
the field theory (which depends on our choice of the Calabi-Yau structure
$\tr^{\infty}_{S^1}$) and compare it to the canonical pairing calculated in \cite{polishchuk}.

\bibliographystyle{halpha}
\bibliography{biblio}

\noindent
Tobias Dyckerhoff: Department of Mathematics, University of Pennsylvania\\
email: \texttt{tdyckerh@math.upenn.edu}\\[1ex]
Daniel Murfet: Hausdorff Center for Mathematics, University of Bonn\\
email: \texttt{daniel.murfet@math.uni-bonn.de}

\end{document}